\newtheorem{thm}{Theorem}[section]
\newtheorem{prop}[thm]{Proposition}
\newtheorem{lem}[thm]{Lemma}
\newtheorem{cor}[thm]{Corollary}
\theoremstyle{definition}
\newtheorem{defn}[thm]{Definition}
\theoremstyle{remark}
\newtheorem{rem}[thm]{Remark}
\newtheorem{remark}[thm]{Remark}
\newcommand{\fjkt}{\frac{j+k}2}
\newcommand{\ajkz}{\A{jk}{z}}
\newcommand{\ajkw}{\A{jk}{w}}
\newcommand{\ajko}{\A{jk}{0}}
\newcommand{\ajkoo}{\A{j(k+1)}{0}}
\newcommand{\sumjko}{\sum_{\atopp{j\geq 1}{k\geq 0}}}
\renewcommand{\H}{\mathcal{H}}
\newcommand{\R}{\mathbb R}
\newcommand{\Z}{\mathbb Z}
\newcommand{\N}{\mathbb N}
\newcommand{\C}{\mathbb C}
\DeclareMathOperator{\supp}{supp}
\DeclareMathOperator{\Imm}{Im}
\newcommand{\p}{\partial}
\newcommand{\ges}{\gtrsim}
\newcommand{\z}{\bar z}
\newcommand{\w}{\bar w}
\newcommand{\dbar}{\bar\partial}
\newcommand{\dbarb}{\bar\partial_b}
\newcommand{\dbarbs}{\bar\partial_b\hspace{-3.5pt}{}^*}
\newcommand{\Boxb}{\Box_b}
\newcommand{\vp}{\varphi}
\newcommand{\atopp}[2]{\genfrac{}{}{0pt}{2}{#1}{#2}}
\newcommand{\A}[2]{a_{#1}^{#2}}
\newcommand{\sjk}{\sum_{j,k\geq 1}}
\newcommand{\sab}{\sum_{\alpha,\beta\geq 1}}
\newcommand{\nn}{\nonumber}
\newcommand{\LL}{\bar L}
\newcommand{\Zstp}{Z_{\tau p}}
\newcommand{\Zbstp}{\overline{Z}_{\tau p}}
\newcommand{\Zstpw}{Z_{\tau p,w}}
\newcommand{\Zbstpw}{\overline{Z}_{\tau p,w}}
\newcommand{\Zstpz}{Z_{\tau p,z}}
\newcommand{\Zbstpz}{\overline{Z}_{\tau p,z}}
\newcommand{\Zstpxi}{Z_{\tau p,\xi}}
\newcommand{\Mtp}{M_{\tau p}}
\newcommand{\Mtpzw}{M_{\tau p}^{z,w}}
\newcommand{\Wstpw}{W_{\tau p,w}}
\newcommand{\Wbstpw}{\overline{W}_{\tau p,w}}
\newcommand{\epl}{e^{i \tau T(w,z)}}
\newcommand{\emi}{e^{-i \tau T(w,z)}}
\newcommand{\T}{\epl \frac{\p}{\p\tau} \emi}
\newcommand{\ep}{\epsilon}
\newcommand{\Boxtp}{\Box_{\tau p}}
\newcommand{\Boxwtp}{\Boxw_{\tau p}}
\newcommand{\Boxw}{{\widetilde \Box}}
\newcommand{\Boxtpz}{\Box_{\tau p,z}}
\newcommand{\Boxtpxi}{\Box_{\tau p,\xi}}
\newcommand{\I}{\mathcal{I}}
\newcommand{\Boxwtpz}{\Boxw_{\tau p,z}}
\renewcommand{\H}{\mathcal{H}_{\tau p}}
\newcommand{\Htp}{H_{\tau p}}
\newcommand{\Hwtp}{{\tilde H}_{\tau p}}
\newcommand{\la}{\langle}
\newcommand{\ra}{\rangle}
\begin{document}

\begin{abstract} In this article, we establish Gaussian 
decay for the $\Boxb$-heat kernel on polynomial models in $\C^2$. Our
technique attains the exponential decay 
via a partial Fourier transform. On the transform side, the problem
becomes finding  quantitative smoothness estimates on a 
heat kernel associated to the weighted
$\dbar$-operator on $L^2(\C)$. The bounds are established 
with Duhamel's formula and careful estimation.
\end{abstract}

\title {Heat Kernels, Smoothness Estimates and Exponential Decay}

\author{Albert Boggess and Andrew Raich}

\thanks{The second author  is partially supported by NSF grant DMS-0855822.}

\address{
Department of Mathematics\\ Texas A\&M University\\ Mailstop 3368  \\ College Station, TX
77845-3368 }
\address{
Department of Mathematical Sciences \\ 1 University of Arkansas \\ SCEN 327 \\ Fayetteville, AR 72701}

\subjclass[2000]{32W30 (Primary), 32W10, 35K08, 42B10}

\keywords{heat kernel, polynomial model, Gaussian decay, weighted $\bar\partial$-operator, Szeg\"o kernel, quantitative smoothness estimates}
\email{boggess@math.tamu.edu, araich@uark.edu}

\maketitle

%%%%%%%%%%%%%%%%%%%%%%%%%%%%%%%%%%%%
%
%	SECTION: INTRODUCTION
%
%%%%%%%%%%%%%%%%%%%%%%%%%%%%%%%%%%%%%
\section{Introduction}\label{sec:intro}
The purpose of this article is to prove Gaussian decay for the $\Box_b$ heat kernel on 
polynomial models in $\C^2$ and introduce a class of estimates called quantitative smoothness estimates.
We develop a new method for obtaining exponential decay via the Fourier transform as our newly developed quantitative
smoothness estimates characterize such functions. 
We are then able to show that the kernel associated to a weighted $\dbar$-operator on $\C$ satisfies a number of
quantitative smoothness estimates, and this allows us to recover the Gaussian decay estimate for the $\Box_b$ heat kernel.

% subsection: Boxb on polynomial models in C^2
\subsection{Polynomial models in $\C^2$}
\begin{defn}\label{defn:polynomial model}A \textbf{polynomial model} $M\subset\C^2$ is a manifold of the form
\[
M = \{(z,w)\in\C^2 : \Imm w = p(z)\}
\]
where $p:\C\to\R$ is a subharmonic, nonharmonic polynomial.
\end{defn}
$M$ is the boundary of an unbounded pseudoconvex domain called a polynomial model domain. For example, if $p(z) = |z|^2$, then $M$
is the Heisenberg group $\mathbb{H}^1$ and is the boundary of the Siegel upper-half space.
$M \cong \C\times\R$ with the identification $(z,t+ip(z)) \longleftrightarrow (z,t)$. We will not distinguish $M\subset\C^2$ with
its image $\C\times\R$. The tangential Cauchy-Riemann operator $\dbar_b$ on $M$ can be identified with the vector field
\[
\LL = \frac{\p}{\p\z} - i\frac{\p p}{\p\z}\frac{\p}{\p t}
\]
and $\dbarbs$ on $M$ can be identified with the vector field
\[
L = \frac{\p}{\p z} + i \frac{\p p}{\p z}\frac{\p}{\p t}.
\]
The Kohn Laplacian $\Box_b = \dbarb\dbarb^* + \dbarb^*\dbarb$ is then
identified with $\Boxb = -\LL L$ on $(0,1)$-forms and $\Boxb = -L\LL$ on functions.

The $\dbarb$-complex on unbounded CR-manifolds is a relatively unexplored subject, and polynomial models provide a model case
to study. In addition, polynomial models provide a good local approximation to a CR manifold of finite type and have
been used to prove local results in dimension 3, see e.g., \cite{Chr89e}.
An advantage of working with polynomial models is that the nonisotropic control metric is globally defined \cite{NaStWa85}. This
is one reason that, with notable exceptions such as 
Kang's work closed range of $\dbarb$ on weighted $L^2$ when $p(z)$ is radial \cite{Kan89}, 
a major focus of the analysis is establishing pointwise estimates
on integral kernels (in terms of the control metric) \cite{NaRoStWa89,NaSt01h,NaSt06,Rai06f}.
As mentioned above, the prototype polynomial model is the Heisenberg group. Analysis on it, however, is aided by the fact that it is a Lie group, whereas
the generic polynomial model lacks any group structure. 

% subsection: heat equations on polynomial models
\subsection{$\Boxb$-heat kernel}
Our goal is the prove pointwise estimates on the $\Box_b$-heat kernel and its derivatives. For $\alpha=(z,t_1), q=(w,t_2)\in\C\times\R$,
The $\Boxb$-heat equation
is
\begin{equation}\label{eqn:Boxb heat equation}
\begin{cases}
\displaystyle \frac{\p u}{\p s} + \Boxb u =0  &\text{in } (0,\infty)\times\C\times\R\\
u(0,\alpha) = f(\alpha) &\text{on } \{s=0\}\times\C\times\R
\end{cases}
\end{equation}
As in \cite{NaSt01h}, we solve (\ref{eqn:Boxb heat equation}) using the heat semigroup $e^{-s\Boxb}$. 
In particular, there exists 
the heat kernel $\H(s,\alpha,\beta)$ that is $C^\infty$ off of the diagonal 
$\{s=0,\ \alpha=\beta\}$ and if $u(s,\alpha) = (e^{-s\Boxb}f)(\alpha)$, then
\[
u(s,\alpha) = \int_{\C\times\R} \H(s,\alpha,\beta) f(\beta)\, d\beta
\]
and $u$ solves \eqref{eqn:Boxb heat equation}. 

Solving the $\Boxb$-heat equation has many applications to the theory of $\Boxb$. In particular, the spectral theorem
for unbounded operators allows us to recover  the 
Szeg\"o kernel $S$ as $S = \lim_{s\to\infty}e^{-s\Boxb}$ and the relative fundamental solution which
is given by $\int_0^\infty e^{-s\Boxb}(I-S)\, ds$. 
Moreover, one method to bound the number of eigenvalues below a fixed threshold requires estimates on the trace
(in the operator sense) of the heat kernel for small time.

In \cite{NaSt01h}, Nagel and Stein prove that the heat kernel $\H(s,\alpha,\beta)$ satisfies rapid decay, and our goal is to present
a calculation to improve the decay to exponential decay. Similar results have been obtained in an unpublished
result by Nagel and M\"uller and independently by Street \cite{Str09h}. Nagel and M\"uller adapt the technique of
\cite{JeSa86} while Street adapts the technique of \cite{Sik04, Rai07}. The disadvantage of the techniques of Nagel/M\"uller and
Street is that they do not seem to generalize to higher dimensions. Our ideas ought to generalize, and we plan to pursue this
in a future work.

% subsection: Connection of polynomial models with weighted estimates on C
\subsection{Weighted operators on $L^2(\C)$.}
Since the operator $\LL$ is translation invariant in $t$, we can study $\Boxb$ by taking a partial Fourier transform in $t$. 
Studying $\dbarb$ on polynomial models via the partial Fourier transforms has been a fruitful method
\cite{Na86,Christ91,Has94,Has95,Has98,Rai06h,Rai06f,Rai07,BoRa09,Rai10h,BoRa10}

If $f(z,t)$ is a function
on $\C\times\R$, we define the partial Fourier transform of $f$ by
\[
\hat f(z,\tau) = \int_{\R} e^{-it\tau} f(z,t)\, dt.
\]
Under the partial Fourier transform
\[
\LL \mapsto \Zbstp = \frac{\p}{\p\z} + \tau\frac{\p p}{\p\z} = e^{-\tau p}\frac{\p}{\p\z} e^{\tau p}
\qquad\text{and}\qquad
L \mapsto \Zstp = \frac{\p}{\p z} - \tau \frac{\p p}{\p z}  = e^{\tau p}\frac{\p}{\p z} e^{-\tau p}.
\]
Similarly, the Kohn Laplacian
$\Box_b$ on $(0,1)$-forms maps to $\Boxtp = -\Zbstp\Zstp$ and $\Box_b$ 
on functions maps to $\Boxwtp= -\Zstp\Zbstp$.
We will see below that understanding the $\tau$-derivative of the $\Boxb$-heat kernel 
$\tau$ is essential for proving its exponential decay estimates. 

Applying the partial Fourier transform to (\ref{eqn:Boxb heat equation}), we have the heat equations
\begin{equation}\label{eqn:Boxtp heat equation}
\begin{cases} {\displaystyle \frac{\p u}{\p s}} + \Boxtpz u=0 &\text{on }(0,\infty)\times\C \vspace*{.1in}\\
u(0,z) = f(z) & \text{in } \{s=0\}\times\C \end{cases}
\end{equation}
and
\begin{equation}\label{eqn:Boxwtp heat equation}
\begin{cases} {\displaystyle \frac{\p \tilde u}{\p s}} + \Boxwtpz \tilde u=0    &\text{on }(0,\infty)\times\C \vspace*{.1in}\\
\tilde u(0,z) = \tilde f(z)   &\text{in } \{s=0\}\times\C.\end{cases}
\end{equation}
Note that $u$ and $\tilde u$ are no longer functions of $t$ as they were in (\ref{eqn:Boxb heat equation}), and generically, $u$ and 
$\tilde u$ are not functions of $\tau$ as we think of $\tau$ as a parameter.
Let $\Htp(s,z,w)$ and $\Hwtp(s,z,w)$ be the heat kernels associated to \eqref{eqn:Boxtp heat equation} and \eqref{eqn:Boxwtp heat equation}, respectively.
It turns out that these heat equations are dual to one another in the following sense:
if 
\[
\Boxwtpz = -\Zstpz\Zbstpz,
\]
then 
\[
\Box_{-\tau p,z} = \overline{\Boxwtpz}.
\]
This equality, coupled with the fact that $\Boxwtpz$ is self-adjoint, forces
\begin{equation}
\label{eqn:HtwiddleH}
H_{-\tau  p}(s,z,w) = \overline{\Hwtp(s,z,w)} = \Hwtp(s,w,z).
\end{equation}
In other words, the roles and $\Zbstp$ and $\Zstp$ switch when $\tau<0$. This is a key equality for handling both the $\tau<0$ case as well as the case when
$\Box_b = -L \LL$. See Remark \ref{rem:main theorem reductions} for details.

% Outline of the argument
\subsection{Outline of the article.}
In Section \ref{sec:results}, we introduce notation and formulate the Gaussian decay result on polynomial models, Theorem \ref{thm:Boxb heat kernel decay}. 
Generically, the exponential  decay of the $\Boxb$-heat kernel is of the form $e^{-a |t|^{1/\beta}}$ where
$\beta\geq 1$. Since we are using the Fourier transform to recover the estimates, we need to find a condition that
is tractable across the transform. To do this, we characterize $e^{-a |t|^{1/\beta}}$ in terms of
$\| |t|^\ell e^{-a |t|^{1/\beta}}\|_{L^\infty(\R)}$ for $\ell\geq 0$ in the spirit of \cite{GeSh67}. This leads to estimates on the 
Fourier transform side that we call quantitative smoothness estimates. This is the content of Section \ref{sec:QSE}.
In Section \ref{sec:heat kernel in terms of QSE}, we recast the Gaussian decay in terms of the quantitative
smoothness estimates. In Section \ref{sec:tau deriv estimates}, we formulate the main result on the quantitative smoothness estimate
of the $\Boxtp$-heat kernel,
Theorem \ref{thm:Htp satisfies QSE}, and show that this result implies Theorem \ref{thm:Boxb heat kernel decay}. 
To establish the estimates of Theorem \ref{thm:Htp satisfies QSE} , we combine Duhamel's principle and a recursion to find a formula for
the $\tau$-derivatives of the $\Boxtp$-heat kernel. This is the content of  Section \ref{sec:a good formula for MH}. 
In Section \ref{sec: proof of QSE thm}, we prove Theorem \ref{thm:Htp satisfies QSE}.

%%%%%%%%%%%%%%%%%%%%%%%%%%%%%%%%
%
%	SECTION: RESULTS
%
%%%%%%%%%%%%%%%%%%%%%%%%%%%%%%%%
\section{Results}\label{sec:results}

% subsection: the control metric on M
\subsection{The control metric on $M$}
In \cite{NaStWa85}, Nagel et.\ al.\ prove the existence of the control metric on manifolds such as $M$. We need
to introduce some quantities to write down an equivalent size to the metric (see \cite{NaStWa85,NaSt01h,NaSt01f} for details).
Let
\[
T(w,z) = -2\Imm\Big(\sum_{j\geq 1} \frac{1}{j!} \frac{\p^j p(z)}{\p z^j}(w-z)^j\Big).
\]
and
\begin{align*}
\ajkz &= \frac{1}{j!k!}\frac{\p^{j+k} p(z)}{\p z^j \p\z^k}, &
\Lambda(z,\delta) &= \sjk |\ajkz| \delta^{j+k}, &
\mu(z,\delta) &= \inf_{j,k\geq 1} \Big| \frac{\delta}{\ajkz}\Big|^{\frac 1{j+k}}.
\end{align*}
The functions $\mu$ and $\Lambda$ are relative inverses in the sense that
\[
\mu\big(z,\Lambda(z,\delta)\big) \sim \Lambda\big(\mu(z,\delta)\big) \sim \delta.
\]
We say that $A\sim B$ if there exists a global constant $c$ so that $\frac 1c A \leq B \leq cA$. 
For points $\alpha=(z,t_1)$ and $\beta=(w,t_2)$, the control metric on $M$ is equivalent to (with an abuse of notation)
\[
d(\alpha,\beta) = d(z,w,t_1-t_2) = |z-w| + \mu\big(z,t_1-t_2 + T(z,w)\big),
\]
and with this distance, the volume of a ball of radius $\delta$, $B_d(\alpha,\delta)$ is 
\[
|B_d(\alpha,\delta)| \sim \delta^2\Lambda(z,\delta).
\]
Since $|B_d(\alpha,\delta)|$ does not depend on $t$, we sometimes engage in a small abuse of notation and write $|B_d(z,\delta)|$ in lieu of
$|B_d(\alpha,\delta)|$.
Given points $\alpha,\beta\in \C\times\R$ as above, the volume of the ball centered at $p$ of radius $d(\alpha,\beta)$ is denoted 
$V(\alpha,\beta) = V(z,w,t_1-t_2)$ and 
\begin{multline*}
V(z,w,t_1-t_2) = d(z,w,t_1-t_2)^2\Lambda\big(z,d(z,w,t_1-t_2)\big) \\
\sim \max\Big\{ |z-w|^2\Lambda(z,|w-z|), \mu\big(z,t_1-t_2 + T(w,z)\big)^2 \big|t_1-t_2+T(w,z)\big| \Big\}.
\end{multline*}

As a consequence of the ``twist", $T(w,z)$, the derivative in $\tau$ is the twisted derivative
\[
\Mtpzw = \T = \frac{\p}{\p \tau} - i T(w,z),
\]
as $-i(t+T(w,z)) \vp\ \hat \mapsto\ \Mtpzw\hat\vp(\tau)$.

% subsection: Results
\subsection{Results}
For the remainder of the paper, consider $z$ and $w$ as fixed points in $\C$. 
Let $J = (j_0,\dots, j_k) \in \{0,1\}^k$ be a multiindex. 
We set $X^J = X^{j_0}\cdots X^{j_k}$ where $X^0 = L$ and $X^1 = \LL$.
We now present our main theorem on polynomial models.
% Theorem: Gaussian heat kernel decay on polynomial models
\begin{thm} \label{thm:Boxb heat kernel decay}
Let $\H(s,\alpha,\beta)$ be the $\Box_b$-heat kernel associated to (\ref{eqn:Boxb heat equation}).
Let $J$ and $J'$ be multiindices. There exists positive constants $C,c>0$  so that
\begin{equation}\label{eqn:Boxb heat kernel decay}
|X^J_\alpha X^{J'}_\beta \H(s,\alpha,\beta)| 
\leq C \frac{e^{-c \frac{d(\alpha,\beta)^2}s}} {d(\alpha,\beta)^{|J|+|J'|}V(\alpha,\beta)}
\end{equation}
for all $\alpha$ and $\beta \in M$ and $s>0$.
If $X^J_\alpha X^{J'}_\beta \frac{\p^j}{\p s^j} S(\alpha,\beta)=0$ where $S(\alpha,\beta)$ is the Szeg\"o kernel, then
\begin{equation}\label{eqn:time derivative Boxb heat kernel decay}
\bigg|X^J_\alpha X^{J'}_\beta \frac{\p^j}{\p s^j}\H(s,\alpha,\beta)\bigg| 
\leq C \frac{e^{-c \frac{d(\alpha,\beta)^2}s}} {s^{j+ \frac12(|J|+|J'|)} |B_d(\alpha,\sqrt s)|}
\end{equation}
for all $\alpha$ and $\beta \in M$ and $s>0$.
\end{thm}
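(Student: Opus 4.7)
The plan is to follow the roadmap laid out in the introduction: take the partial Fourier transform in $t$, convert the problem of Gaussian decay of $\H$ in $t_1-t_2$ into one of controlled $\tau$-derivative growth of the family $\{\Htp(s,z,w)\}_{\tau\in\R}$ on the transform side, and then invert. Concretely, $\H$ and $\Htp$ are related by
\[
\H(s,(z,t_1),(w,t_2)) = \frac{1}{2\pi}\int_\R e^{i\tau(t_1-t_2)}\,\Htp(s,z,w)\,d\tau,
\]
spatial derivatives $X^J_\alpha X^{J'}_\beta$ transform into appropriate compositions of $\Zstp,\Zbstp$ acting in $z$ and $w$, and multiplication by $(t_1-t_2+T(w,z))$ on the physical side corresponds to $-i\Mtpzw$ on the transform side. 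Thus the object we must actually bound is
\[
\int_\R e^{i\tau(t_1-t_2)}(\Mtpzw)^\ell\,\Zstp^{J_1}\,\Zbstp^{J_2}\,\Htp(s,z,w)\,d\tau
\]
for every $\ell\ge 0$, with estimates sharp enough that optimizing over $\ell$ produces a Gaussian in $|t_1-t_2+T(w,z)|/s$, which via $d(\alpha,\beta)=|z-w|+\mu(z,t_1-t_2+T(w,z))$ is exactly what encodes the nonisotropic part of the target bound.

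The second step invokes the Gel'fand--Shilov-style characterization of Section \ref{sec:QSE}: the desired Gaussian $e^{-cd(\alpha,\beta)^2/s}$ decay in the nonisotropic direction is equivalent to a family of quantitative smoothness bounds on the Fourier-side integrand of the shape
\[
\big|(\Mtpzw)^\ell\,\Zstp^{J_1}\,\Zbstp^{J_2}\,\Htp(s,z,w)\big| \les C^{\ell+1}\,\ell!\,s^{\ell}\cdot G(s,z,w),
\]
where $G$ absorbs the correct size factors $|B_d(z,\sqrt s)|^{-1}$ and the derivative gains in $z,w$. These are exactly the estimates furnished by Theorem \ref{thm:Htp satisfies QSE}. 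Granted this input, the Fourier inversion converges absolutely on a complex strip of width proportional to $s$, which by the Paley--Wiener-style argument of Section \ref{sec:heat kernel in terms of QSE} produces the nonisotropic part of \eqref{eqn:Boxb heat kernel decay}, while the isotropic factor $e^{-c|z-w|^2/s}$ comes from standard heat-kernel comparison for $\Htp$ on $\C$ uniformly in $\tau$. For \eqref{eqn:time derivative Boxb heat kernel decay}, one observes that $\partial_s$ brings down $-\Boxtp$ on the transform side, which is absorbed into the spatial derivative count and yields the $s^{-j-(|J|+|J'|)/2}$ factor on rescaling; the hypothesis that $X^J X^{J'}\partial_s^j S=0$ subtracts the contribution of $\ker\Boxb$ (i.e., the Szeg\"o projection at $\tau$ where $\Boxtp$ degenerates), which is what makes the $\tau$-integral convergent with the stated small-time bound. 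The $\tau<0$ half of the integral and the companion case $\Boxb=-L\LL$ are handled simultaneously by \eqref{eqn:HtwiddleH}, which swaps the roles of $\Zstp,\Zbstp$ and reduces everything to $\tau>0$, as noted in Remark \ref{rem:main theorem reductions}.

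The main obstacle is obtaining the factorial-in-$\ell$ growth in the QSE bounds for $\Htp$, since this is precisely what controls the radius of convergence in $\tau$ and hence the Gaussian constant $c$. Each application of $\Mtpzw=\epl\,\partial_\tau\,\emi$ must cost at most one factor of $s$ and introduce at most a bounded number of new terms once expanded against the semigroup. This is exactly what the Duhamel recursion of Section \ref{sec:a good formula for MH} is designed to accomplish: writing $\Htp$ via Duhamel's formula against a suitable reference heat kernel and recursing on $\ell$ yields an explicit formula in which each $\Mtpzw$ costs one time integration and one factor of $T(w,z)$, so that the iterated bound grows like $s^\ell\,\ell!$ and no worse. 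Once this recursion and the resulting Theorem \ref{thm:Htp satisfies QSE} are in place, Theorem \ref{thm:Boxb heat kernel decay} drops out of the QSE-to-Gaussian machinery described above without further work.
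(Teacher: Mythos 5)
The high-level roadmap you sketch — transform in $t$, convert decay to quantitative-smoothness ($\tau$-derivative) bounds on $\Htp$, establish those via the Duhamel recursion, then invert — is indeed the paper's strategy, and the reduction to $\tau>0$ via \eqref{eqn:HtwiddleH} is handled as you say. But the execution you propose has two genuine gaps where the paper has to do something different, and without those adjustments the argument does not close.

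First, you plan to push \emph{all} spatial derivatives $X^J_\alpha X^{J'}_\beta$ through the Fourier transform and bound $\int_\R e^{i\tau(t_1-t_2)}(\Mtpzw)^\ell\,\Zstp^{J_1}\Zbstp^{J_2}\Htp\,d\tau$ directly. That requires uniform-in-$\ell$ QSE-type bounds on arbitrary-order spatial derivatives of $\Htp$, but Theorem \ref{thm:Htp satisfies QSE} only supplies control for at most \emph{two} spatial derivatives, and only for very specific combinations ($X=\Zbstpz,\Zstpz,\Wbstpw,\Wstpw$; $X^2=\Wstpw\Wbstpw$ or $\Zstpz\Wbstpw$). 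The restriction is not cosmetic: the Duhamel recursion of Proposition \ref{prop:n derivs, recursion} produces, after each extra spatial derivative on the transform side, an extra power of $(r_j-r_{j+1})^{-1/2}$ in the integrand, and past second order the iterated $r$-integrals via \eqref{eqn:convolution of s and (r-s)} diverge; the paper saves exactly one order by the integration-by-parts trick \eqref{eqn: integration by parts for 2 derivs}, and the two-derivative cases are what is needed just to handle $\tau<0$ through \eqref{eqn: M^n H tau<0 with M^n H tau>0}. To get $|J|+|J'|$ arbitrary, the paper instead uses a subelliptic Sobolev/bootstrapping argument (Theorem 3.4.2 of \cite{NaSt01h} with scaled bump functions from \cite{NaSt01f}) that upgrades the undifferentiated bound on $\H$ to bounds on $X^J_\alpha X^{J'}_\beta\H$ after the Fourier inversion has already been done. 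Your plan needs to be replaced by this post-inversion bootstrapping.

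Second, the estimate \eqref{eqn:Boxb heat kernel decay} itself cannot be obtained by directly inverting the undifferentiated QSE bounds for $(\Mtpzw)^n\Htp$: the $\tau$-integral of $\Htp(s,z,\cdot)$ does not decay as $|\tau|\to\infty$ fast enough by itself, and the decay in $\tau$ in part (i) of Theorem \ref{thm:Htp satisfies QSE} (the $\tau^{-2}$ factor) is only exploited together with the fact that $\partial_s\Htp = -\Wstpw\Wbstpw\Htp$, i.e.\ two spatial derivatives appear automatically. The paper therefore estimates $\partial_s\H$ first (obtaining \eqref{eqn:al3}), then recovers $\H$ by integrating in $s$ from $0$ via the Fundamental Theorem of Calculus and $\H(0,\alpha,\beta)=0$ off the diagonal, splitting into $s\geq\tfrac12 c_0 d^2$ and $s\leq\tfrac12 c_0 d^2$. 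Your proposal skips this detour. Also, the "converges absolutely on a complex strip of width $\sim s$ \dots Paley--Wiener-style argument" description is incorrect for the higher-type regime $j+k>2$: there the exponent is $\beta=(j+k)/2>1$, the transform-side functions lie in no quasianalytic class, and there is no strip of holomorphy; Section \ref{sec:heat kernel in terms of QSE} works entirely through the moment characterization of Corollary \ref{cor:exponential decay equivalence with estimates}, not analytic continuation. Finally, the growth you quote ($C^{\ell+1}\ell!\,s^\ell$) is only the $j+k=2$ case; the actual bound in Theorem \ref{thm:Htp satisfies QSE} is a sum over $j,k$ of $|\ajkz|^n s^{n(j+k)/2}n^{n(j+k)/2}$, whose $n$-dependence varies with the type. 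For \eqref{eqn:time derivative Boxb heat kernel decay}, note that the paper does not re-run the QSE machinery at all: by Remark \ref{rem:main theorem reductions}(iii) this estimate follows from \eqref{eqn:Boxb heat kernel decay} combined with the known bounds for the kernel of $e^{-s\Box_b}(I-S)$ from \cite{NaSt01h}, together with $\partial_s^j\H=(-1)^j\Box_b^j\H$.
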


%Remark: reductions in the theorem
\begin{rem}\label{rem:main theorem reductions} There are several reductions that we can make.
\begin{enumerate}\renewcommand{\labelenumi}{(\roman{enumi})}
\item The bounds for $|X^J_\alpha X^{J'}_\beta\H(s,\alpha,\beta)|$ when $d(\alpha,\beta) \sim |z-w|$ are proved in \cite{Rai10h}.
\item Notice that if $s \geq d(\alpha,\beta)^2$, then $\exp(-c_0 d(\alpha,\beta)^2/s) \sim 1$ and provides no
decay as $s\to\infty$. Consequently, the bounds when
$s \geq d(\alpha,\beta)^2$ are a consequence of \cite{NaSt01h} or \cite{Rai10h}.
\item  The estimate (\ref{eqn:time derivative Boxb heat kernel decay}) is only better than 
(\ref{eqn:Boxb heat kernel decay})  for large $s$, i.e., when $s\geq d(\alpha,\beta)^2$. In this 
case, there is decay in $s$ that is simply not present if $X^J_\alpha X^{J'}_\beta \frac{\p^j}{\p s^j} S(\alpha,\beta)\neq0$ as $\lim_{s\to\infty} e^{-s\Box_b}=S$. 
When $X^J_\alpha X^{J'}_\beta \frac{\p^j}{\p s^j} S(\alpha,\beta)=0$, the decay in (\ref{eqn:time derivative Boxb heat kernel decay}) occurs 
because the the derivative of the kernel of heat semigroup $e^{-s\Box_b}$  will coincide with the derivative of the kernel of
the semigroup $e^{-s\Box_b}(I-S)$. The estimates in \eqref{eqn:time derivative Boxb heat kernel decay} follow immediately from 
\eqref{eqn:Boxb heat kernel decay} and the estimates for  the kernel of $e^{-s\Box_b}(I-S)$ proven in
 \cite{NaSt01h} (and they can also be obtained from \cite{Rai10h}). 
Since the constant $c$ is not sharp, the small time estimate in  \eqref{eqn:time derivative Boxb heat kernel decay} is equivalent to the small
time estimate in \eqref{eqn:Boxb heat kernel decay} (with a slight decrease in $c$).
\item Theorem \ref{thm:Boxb heat kernel decay} makes no distinction between $\Box_b = -L \LL$ on functions and $\Box_b = - \LL L$ on
$(0,1)$-forms. $- L \LL\ \hat\mapsto\ \Boxwtp$ while $-\LL L\ \hat\mapsto\ \Boxtp$. Because of \eqref{eqn:HtwiddleH}, understanding
the $\tau>0$ cases for $\Htp(s,z,w)$ and $\Hwtp(s,z,w)$ is equivalent to understanding the $\tau<0$ cases for $\Hwtp(s,z,w)$ and $\Htp(s,z,w)$, respectively.
\item Consequently, it suffices to prove Theorem \ref{thm:Boxb heat kernel decay} when $\Boxb = - \LL L$, $s\leq d(\alpha,\beta)^2$, and
$d(\alpha,\beta) = \mu(z,t_1-t_2+T(w,z))$.
\end{enumerate}
\end{rem}

Thus, the content of Theorem \ref{thm:Boxb heat kernel decay} is to achieve \eqref{eqn:Boxb heat kernel decay} 
for $d(\alpha,\beta) \sim \mu(z,t_1-t_2+T(w,z))$ and $s \leq d(\alpha,\beta)^2$.
%Also, we will show that
%\eqref{eqn:time derivative Boxb heat kernel decay} and the fact that $\H(0,\alpha,\beta)=0$ if $\alpha\neq\beta$ 
%implies \eqref{eqn:Boxb heat kernel decay}.

%%%%%%%%%%%%%%%%%%%%%%
%
%	SECTION: QUANTITATIVE SMOOTHNESS ESTIMATES
%
%%%%%%%%%%%%%%%%%%%%%%%
\section{Quantitative smoothness estimates}\label{sec:QSE}
The proof of Theorem \ref{thm:Boxb heat kernel decay} uses the heuristic that decay on the function side corresponds to 
smoothness on the transform side. In particular, we need to understand to the Fourier transforms of 
functions that decay like $e^{-a |t|^{1/\beta}}$ when
$\beta \geq 1$. To do this, we introduce quantitative smoothness estimates.
\begin{defn} \label{defn: L^p QSE}
A smooth function $g:\R\to\C$ satisfies an \textbf{$L^q$-quantitative smoothness estimate of order $\beta$}, abbreviated
$L^q$-QSE,  if
there exist constants $A, C>0$ so that for all integers $\ell\geq 0$,
\[
\| g^{(\ell)} \|_{L^q(\R)} \leq C A^\ell \ell^{\ell\beta}.
\]
\end{defn}
Here, $ g^{(\ell)}$ stands for the derivative of order $\ell$ of $g$.
If $\beta \leq1$ and $g$ satisfies an $L^\infty$-QSE, then $g$ will be in some quasianalytic class 
and extend holomorphically to a strip ($\beta=1$) or to an entire function $(\beta<1)$.
For $\beta>1$, the case of interest here, such functions do not lie in any quasianalytic class. 
This is an immediate consequence of the Denjoy-Carleman Theorem
(see \cite{Rud87}, Theorem 19.11).

% subsection Explanation of QSE
\subsection{Explanation of QSE}
The ideas for these calculations can be found in \cite{GeSh67}. 
From first year calculus, we know that 
\begin{equation}\label{eqn:polynomial times exponential decay}
|t|^\gamma e^{-a|t|^{1/\beta}} \leq \big(\frac{\gamma\beta}{ae}\big)^{\gamma\beta}.
\end{equation}
The surprising fact is that this inequality, if it is true for all $\gamma$, is actually equivalent to exponential decay.
We have the following proposition from \cite{GeSh67}.
% Proposition
\begin{prop}\label{prop:exponential decay equivalence}
Let $a,\beta>0$. Then
\begin{equation}
\label{eqn:propdisplay}
e^{- a |t|^{1/\beta}} = \inf_{\gamma\geq 0} \Big( \frac{\gamma\beta}{ae|t|^{1/\beta}}\Big)^{\gamma\beta}
\leq \inf_{\atopp{n\geq 1}{n\in\Z}}\left\{ 1, \  \Big( \frac{n\beta}{ae|t|^{1/\beta}}\Big)^{n\beta} \right\}
\leq e^{e\beta/2} e^{- a |t|^{1/\beta}} 
\end{equation}
\end{prop}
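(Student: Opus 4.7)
The plan is to treat the three parts of \eqref{eqn:propdisplay} separately: the equality is a one-variable optimization; the middle inequality is formal; and the upper bound is the real content, where one approximates the continuous minimizer of $\gamma\mapsto (\gamma\beta/c)^{\gamma\beta}$ by a positive integer.

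For the equality, I would fix $|t|>0$, set $c=ae|t|^{1/\beta}$, and study $h(\gamma)=\log\bigl((\gamma\beta/c)^{\gamma\beta}\bigr)=\gamma\beta\log(\gamma\beta/c)$ on $(0,\infty)$. A direct computation gives
\[
h'(\gamma)=\beta\bigl(\log(\gamma\beta/c)+1\bigr),\qquad h''(\gamma)=\beta/\gamma>0,
\]
so $h$ is strictly convex with unique minimizer $\gamma^{*}=c/(e\beta)=a|t|^{1/\beta}/\beta$, at which $h(\gamma^{*})=-\beta\gamma^{*}=-a|t|^{1/\beta}$. Taking into account $(\gamma\beta/c)^{\gamma\beta}\to 1$ as $\gamma\to 0^+$, this gives $\inf_{\gamma\geq 0}(\gamma\beta/c)^{\gamma\beta}=e^{-a|t|^{1/\beta}}$, which is the first equality in \eqref{eqn:propdisplay}.

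The middle inequality is immediate once one views the right-hand infimum as taken over the subset $\{0\}\cup\mathbb{Z}_{\geq 1}$ of $[0,\infty)$ (with the convention $g(0)=1$ that is forced by continuity); restricting an infimum to a subset can only increase it.

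For the upper bound, the strategy is to replace $\gamma^{*}$ by a nearest positive integer and control the cost using $h'(\gamma^{*})=0$. I would split into two cases. Case A, $\gamma^{*}\leq 1$: then $a|t|^{1/\beta}\leq\beta\leq e\beta/2$, so $1\leq e^{e\beta/2}e^{-a|t|^{1/\beta}}$, and the constant $1$ inside the infimum already does the job. Case B, $\gamma^{*}>1$: let $n\in\mathbb{Z}_{\geq 1}$ be the integer nearest to $\gamma^{*}$, so $|n-\gamma^{*}|\leq 1/2$ and both $n$ and $\gamma^{*}$ lie in $[1/2,\infty)$. By Taylor with the integral remainder (using $h'(\gamma^{*})=0$),
\[
h(n)-h(\gamma^{*})=\tfrac{1}{2}h''(\xi)(n-\gamma^{*})^{2}\leq\tfrac{1}{2}\cdot\tfrac{\beta}{1/2}\cdot\tfrac{1}{4}=\tfrac{\beta}{4}
\]
for some $\xi$ between $n$ and $\gamma^{*}$. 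Exponentiating gives $(n\beta/c)^{n\beta}\leq e^{\beta/4}e^{-a|t|^{1/\beta}}\leq e^{e\beta/2}e^{-a|t|^{1/\beta}}$, completing Case B. The only real step is Case B, and the main technical point there is checking that the nearest integer $n$ stays bounded away from $0$ so that $h''(\xi)=\beta/\xi$ remains controllable; the case split $\gamma^{*}\lessgtr 1$ is designed exactly to guarantee this, with the slack in the stated constant $e^{e\beta/2}$ easily absorbing the resulting $e^{\beta/4}$ and $e^{\beta}$.
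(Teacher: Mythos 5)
Your proof is correct and follows essentially the same strategy as the paper's: minimize $\log f$ in $\gamma$ via calculus to get the equality, note the middle bound is a restriction of the infimum, and control the integer-approximation error by a Taylor/second-derivative estimate with a case split for small $\gamma^*$. The only cosmetic differences are that you round to the nearest integer (the paper uses $\lceil\gamma_0\rceil$) and you split on $\gamma^*\lessgtr 1$ rather than on $|t|/A\lessgtr 1$.
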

\begin{proof}
We may assume that $t>0$. Let $\nu_\beta(\xi) = \inf_{\gamma\geq 0} \frac{\gamma^{\gamma\beta}}{\xi^\gamma}$ and
$A = (\frac{\beta}{ae})^\beta$.
Note that $\nu_\beta (t/A) $ is the second term in (\ref{eqn:propdisplay}).
We have already seen that  $e^{-a t^{1/\beta}} \leq \nu_\beta (t/A)$. Fix $\xi>0$.
Let $f(\gamma) = \frac{\gamma^{\gamma\beta}}{\xi^\gamma}$. Then
\[
\log f(\gamma) = \gamma\beta\log\gamma - \gamma\log\xi,
\]
so 
\[
\big( \log f(\gamma)\big)' = \frac{f'(\gamma)}{f(\gamma)} = \beta + \beta\log\gamma - \log\xi
\]
and $(\log f(\gamma))'' = \beta/\gamma>0$. Thus, the zero of $(\log f(\gamma))'$ corresponds to a minimum of $f(\gamma)$.
Also, $(\log f(\gamma_0))'=0$ means that $ \gamma_0 = \frac 1e \xi^{1/\beta}$, so 
\[
\min_{\gamma \geq 0} \log f(\gamma) = -\frac 1e \beta \xi^{1/\beta}
\]
and
\[
\min_{\gamma \geq 0}  f(\gamma) = e^{-\frac \beta e |\xi|^{1/\beta}}.
\]
Consequently, we see that $\nu_\beta(t/A) = e^{-a t^{1/\beta}}$ which establishes
the first equality in (\ref{eqn:propdisplay}). The first inequality in (\ref{eqn:propdisplay})
is obvious, so it remains to show the second inequality.

Let $n_0 = \lceil\gamma_0 \rceil$, i.e., the next largest integer greater than $\gamma_0$. By Taylor's Theorem, there
exists $\gamma_1$ so that $\gamma_0 \leq \gamma_1 \leq n_0$ so that
\[
\log f(n_0) = \log f(\gamma_0) + \frac{\beta}{2\gamma_1} (n_0-\gamma_0)^2 \leq
\log f(\gamma_0) + \frac{\beta}{2\gamma_0}.
\]
Thus,
\[
f(n_0) \leq e^{-\frac \beta e |\xi|^{1/\beta}} e^{\frac{\beta e}2 |\xi|^{-1/\beta}}.
\]
If $|\xi|=|t|/A \geq 1$, then $e^{\frac{\beta e}2 |\xi|^{-1/\beta}} \leq e^{\frac{\beta e}2}$
and this establishes the second inequality in (\ref{eqn:propdisplay}) in this case.
On the other hand, if $|\xi|=|t|/A \leq 1$, then note that 
\[
\Big( \frac{n\beta}{ae|t|^{1/\beta}}\Big)^{\beta n}= \frac{n^{n\beta}}{(t/A)^n} >1 \ \ 
\textrm{for} \ n \geq 1.
\]
So the left side of the second inequality in (\ref{eqn:propdisplay}) is 1. On the other hand,
if $t \leq A$, then it is easy to show that the right side of this inequality is 
greater than 1. This concludes the proof of the second inequality and the proof of the proposition
is complete.

\end{proof}

% Corollary: inequalities good enough for exponential equivalence
\begin{cor}\label{cor:exponential decay equivalence with estimates}
Let $\beta, A,C>0$ and $\vp:\R\to \C$ be a function that satisfies 
\[
\big\|t^n \vp \big\|_{L^\infty(\R)} \leq C \Big(\frac{n \beta}{ae}\Big)^{n \beta}
\] 
for all integers $n\geq 0$. Then
\[
|\vp(t)| \leq C e^{-a |t|^{1/\beta}}.
\]
\end{cor}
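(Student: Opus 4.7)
The proof will be a short deduction from Proposition \ref{prop:exponential decay equivalence}, so the plan is essentially to match the hypothesis of the corollary with the bound appearing in \eqref{eqn:propdisplay}. The key algebraic observation is the identity
\[
\Big(\frac{n\beta}{ae}\Big)^{n\beta}\cdot \frac{1}{|t|^{n}} = \Big(\frac{n\beta}{ae\,|t|^{1/\beta}}\Big)^{n\beta},
\]
which holds because $(|t|^{1/\beta})^{n\beta} = |t|^{n}$. This is the link between the polynomial-weighted $L^\infty$ bounds and the $\inf_n$ expression in Proposition \ref{prop:exponential decay equivalence}.

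Given this, the plan is as follows. First, I would rewrite the hypothesis pointwise: for each $t\in\R$ and each integer $n\geq 1$,
\[
|\vp(t)| \leq \frac{C}{|t|^{n}}\Big(\frac{n\beta}{ae}\Big)^{n\beta} = C\Big(\frac{n\beta}{ae\,|t|^{1/\beta}}\Big)^{n\beta}.
\]
The $n=0$ case of the hypothesis gives the trivial bound $|\vp(t)|\leq C$. Next, I would take the infimum over $n\geq 1$ together with the bound $1$, yielding
\[
|\vp(t)| \leq C \inf_{\atopp{n\geq 1}{n\in\Z}}\left\{1,\ \Big(\frac{n\beta}{ae\,|t|^{1/\beta}}\Big)^{n\beta}\right\}.
\]
Finally, I would apply the second inequality of \eqref{eqn:propdisplay} directly to the right-hand side to obtain $|\vp(t)| \leq C e^{e\beta/2} e^{-a |t|^{1/\beta}}$, and then absorb $e^{e\beta/2}$ into the constant $C$ (which is the standard reading of the statement).

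I do not expect any real obstacle here: Proposition \ref{prop:exponential decay equivalence} does all of the analytic work, and the corollary is a direct translation of a discrete $\ell^\infty$-type hypothesis into the continuous exponential bound. The only subtlety worth flagging in the write-up is the $t=0$ case, which is handled by the $n=0$ hypothesis, and the fact that the constant in the conclusion implicitly includes the universal factor $e^{e\beta/2}$ coming from Proposition \ref{prop:exponential decay equivalence}.
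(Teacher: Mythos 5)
Your proof is correct and is essentially the intended deduction: the paper states this corollary without proof, expecting the reader to combine the hypothesis pointwise for each $n$ with the second inequality of Proposition \ref{prop:exponential decay equivalence}, exactly as you do. Your algebraic observation $(\tfrac{n\beta}{ae})^{n\beta}|t|^{-n} = (\tfrac{n\beta}{ae|t|^{1/\beta}})^{n\beta}$ is the correct bridge, and your handling of the $n=0$ case for small $t$ is right.

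One minor point worth flagging, which you already noticed: as literally stated, the corollary claims the conclusion with the \emph{same} constant $C$, but the argument through Proposition \ref{prop:exponential decay equivalence} picks up the universal factor $e^{e\beta/2}$. This is a small overstatement in the paper; the correct conclusion is $|\vp(t)| \leq C e^{e\beta/2} e^{-a|t|^{1/\beta}}$, and the $e^{e\beta/2}$ should be absorbed into a (possibly renamed) constant. In the paper's subsequent uses (Theorem \ref{thm:QSE estimates} and Section \ref{sec:heat kernel in terms of QSE}), the constant is allowed to change, so this is harmless, but your write-up is more careful than the paper's statement in making the dependence explicit. The unused hypothesis constant $A$ in the corollary's statement is a residue of the reformulation $(\tfrac{n\beta}{ae})^{n\beta} = A^n (n\beta)^{n\beta}$ with $A=(\tfrac{\beta}{ae})^\beta$ used in Theorem \ref{thm:QSE estimates}; it plays no role here, and you were right to ignore it.
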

Corollary \ref{cor:exponential decay equivalence with estimates} allows us to connect functions with exponential decay and functions
that satisfy quantitative smoothness estimates. In particular, we have our main result for quantitative smoothness estimates.

%Theorem: Exponential decay implies L^\infty QSE and L^1 QSE imply exponential decay
\begin{thm}  \label{thm:QSE estimates}
Let $\vp:\R\to \C$.
\begin{enumerate}
\item Suppose there exist constants $a, \beta >0$ so that $|\vp(t)| \leq C e^{-a|t|^{1/\beta}}$. If $A = (\frac{\beta}{ae})^\beta$, then it follows that 
\[
|t^n \vp(t)| \leq C A^n (n \beta)^{n\beta}
\] 
for all integers $n\geq 0$ and $\hat \vp$ satisfies $L^\infty$-QSE of order $\beta$.
\item Suppose that $\hat\vp$ satisfies $L^1$-QSE of order $\beta$. Then there exist
$A,C>0$ so that
\[
|t^n \vp(t)| \leq C A^n (n\beta)^{n\beta}
\] 
for all integers $n\geq 0$, i.e.,
\[
| \vp(t)| \leq C e^{-a|t|^{1/\beta}}
\]
where $A = (\frac{\beta}{ae})^\beta$.
\end{enumerate}
\end{thm}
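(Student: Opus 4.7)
The proof rests on the Fourier identity $\widehat{t^\ell\vp}(\tau)=i^\ell(\hat\vp)^{(\ell)}(\tau)$ together with Corollary \ref{cor:exponential decay equivalence with estimates}, which is already available as the characterization of exponential decay by polynomial-growth bounds. For part (1), the pointwise estimate $|t^n\vp(t)|\le CA^n(n\beta)^{n\beta}$ is immediate from the decay hypothesis combined with \eqref{eqn:polynomial times exponential decay}, after absorbing a factor of $\beta^{n\beta}$ into a redefined $A=(\beta/(ae))^\beta$. To pass from this pointwise bound to the $L^\infty$-QSE on $\hat\vp$, I would differentiate under the integral,
\[
(\hat\vp)^{(\ell)}(\tau)=(-i)^\ell\int_\R e^{-it\tau}\,t^\ell\vp(t)\,dt,
\]
and gain integrability by inserting $1+t^2$: applying the pointwise estimate at indices $\ell$ and $\ell+2$ gives $(1+t^2)|t^\ell\vp(t)|\le CA^{\ell+2}\bigl((\ell+2)\beta\bigr)^{(\ell+2)\beta}$, and integrating $\int_\R dt/(1+t^2)=\pi$ yields a bound of the form $C'(A')^\ell(\ell\beta)^{\ell\beta}$ once one uses $(1+2/\ell)^{\ell\beta}\le e^{2\beta}$ to reduce $((\ell+2)\beta)^{(\ell+2)\beta}$ to $(\ell\beta)^{\ell\beta}$ up to constants (the leftover polynomial factor in $\ell$ being absorbed into $A'$). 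This is exactly the $L^\infty$-QSE of order $\beta$.

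For part (2), Fourier inversion applied to $\widehat{t^\ell\vp}(\tau)=i^\ell(\hat\vp)^{(\ell)}(\tau)$ gives
\[
t^\ell\vp(t)=\frac{i^\ell}{2\pi}\int_\R e^{it\tau}(\hat\vp)^{(\ell)}(\tau)\,d\tau,
\]
so the $L^1$-QSE hypothesis produces $|t^\ell\vp(t)|\le (C/2\pi)A^\ell\ell^{\ell\beta}$ at once for every $\ell\ge 0$. Rewriting $\ell^{\ell\beta}=(\ell\beta)^{\ell\beta}/\beta^{\ell\beta}$ and choosing $a$ so that $(\beta/(ae))^\beta=A/\beta^\beta$ places the bound in the precise form demanded by Corollary \ref{cor:exponential decay equivalence with estimates}, which then delivers $|\vp(t)|\le C'e^{-a|t|^{1/\beta}}$.

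The only nontrivial step is the gain-of-two-powers insertion of $1/(1+t^2)$ in part (1), needed because the pointwise bound alone is not integrable; the device costs only a constant in the growth rate since the shift from $((\ell+2)\beta)^{(\ell+2)\beta}$ to $(\ell\beta)^{\ell\beta}$ only produces an $e^{2\beta}$ and a polynomial factor in $\ell$ that is dominated by any $(1+\epsilon)^\ell$ and thereby absorbed into $A'$. Everything else is careful bookkeeping of constants between the equivalent forms $n^{n\beta}$ and $(n\beta)^{n\beta}$ so that the hypotheses of Corollary \ref{cor:exponential decay equivalence with estimates} are matched exactly.
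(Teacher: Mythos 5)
Your proof is correct and follows essentially the same route as the paper's: for part (1), you gain two powers of $t$ to obtain an $L^1$ bound for $t^\ell\vp$ (via $1/(1+t^2)$, where the paper splits the integral at $|t|=1$ and inserts $1/t^2$ on the unbounded piece, which is the same device), and for part (2), both you and the paper invoke Fourier inversion directly. The only cosmetic difference is where the sub-geometric leftover factor $(e^{2\beta}\cdot\text{poly in }\ell$ vs.\ the paper's $1+A^2 n^{2\beta})$ gets absorbed into the geometric constant, which in both treatments is handled by enlarging $A$ by a factor $1+\epsilon$.
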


\begin{proof} Proof of (1). Recall that $\| \hat\vp^{(n)} \|_{L^\infty(\R)} \leq \| t^n \vp\|_{L^1(\R)}$. It follows that
\[
\| \hat\vp^{(n)} \|_{L^\infty(\R)}
\leq \int_{|t|\leq1} |t^n \vp(t)|\, dt + \int_{|t|>1} \frac 1{t^2} |t^{n+2} \vp(t)|\, dt
= 2C A^n n^{n \beta} (1 + A^2 n^{2\beta}).
\]
Next, if $A' = (1+\ep)A > A$, then for large enough $n$, $(A')^n \geq 1+ A^2n^{2\beta}$. Consequently, if 
$C' >C$ is sufficiently large,
\[
\| \hat \vp^{(n)} \|_{L^\infty(\R)}
\leq 2C A^n n^{n \beta} (1 + A^2 n^{2\beta}) 
\leq C' (A')^n n^{n\beta}.
\]

The proof of (2) is  immediate from the equality
$|t^n \vp(t)| = \big|\frac{1}{2\pi} \int_{\R} e^{it\tau} \hat\vp^{(n)}(\tau)\, d\tau \big|$.
\end{proof}

%%%%%%%%%%%%%%%%%%%%
%
%	SECTION: HEAT KERNEL DECAY ESTIMATES IN TERMS OF QSE
%
%%%%%%%%%%%%%%%%%%%%
\section{Heat kernel decay estimates in terms of QSE}\label{sec:heat kernel in terms of QSE}
Fix $z,w\in \C$.  We are interested in the case for which
\[
d(z,w,t) \sim \mu\big(z,t+ T(w,z)\big).
\]
Since $\Box_b$ is translation invariant in $t$, if $\alpha=(z,t_1)$, $\beta = (w,t_2)$ and $t = t_1-t_2$, we can write 
$\H(s,\alpha,\beta) = \H(s,z,w,t)$.

We first prove the estimate (\ref{eqn:time derivative Boxb heat kernel decay}), 
with $J=J'=0$, for $\frac{\p\H}{\p s}(s,z,w,t)$ and then recover the
estimate for $\H(s,\alpha,\beta)$ from it.
We wish to find a sufficient condition so that
\begin{align}
\bigg| \frac{\p\H}{\p s} \big(s,z,w,t\big)\bigg | 
&\leq \frac{C}{s B_d(z,\sqrt s)} e^{-c \frac{\mu(z,t+T(w,z))^2}{s}} \nn\\
&= \sup_{j,k\geq 1} \frac{C}{B_d(z,\sqrt s)} \exp\Big(-\frac c{s|\ajkz|^{\frac 2{j+k}}} {|t+T(w,z)|^{\frac 2{j+k}}}\Big) \nn\\
&\sim \sum_{j,k\geq 1} \frac{C}{B_d(z,\sqrt s)} \exp\Big(-\frac c{s|\ajkz|^{\frac 2{j+k}}} {|t+T(w,z)|^{\frac 2{j+k}}}\Big). \label{eqn:decay we want for H in terms of z}
\end{align}
Since $\H(s,\alpha,\beta) = \overline{\H(s,\beta,\alpha)}$, we can interchange the roles of $z$ and $w$ in (\ref{eqn:decay we want for H in terms of z}) and 
we will find an estimate that implies \eqref{eqn:decay we want for H in terms of z}.
Let $\vp(t) = \frac{\p\H}{\p s}(s,z,w,t)$. 
By Corollary \ref{cor:exponential decay equivalence with estimates},
the exponential decay estimate (\ref{eqn:decay we want for H in terms of z}) 
is equivalent to the estimate
\[
|(t+ T(w,z))^n \vp(t)| \leq  \frac{C A^n}{sB_d(w,\sqrt s)} \sjk |\ajkw|^n s^{n\fjkt} n^{n\fjkt}
\]
for all $n\geq 0$.
We can incorporate the $sB_d(w,\sqrt s)$ into the sum by proving the following:
\begin{multline}
\label{eqn:equivalence}
\frac{1}{s^2} \sjk |\ajkw|^{n-1} s^{(n-1)\fjkt} n^{n\fjkt} \ges \frac{1}{sB_d(w,\sqrt s)} \sjk |\ajkw|^n s^{n\fjkt} n^{n\fjkt} \\
\ges \frac{1}{s^2 n^{\frac 12\deg p}} \sjk |\ajkw|^{n-1} s^{(n-1)\fjkt} n^{n\fjkt}
\end{multline}
where proportionality constants appearing in $\ges$ only depend on the 
the number of terms in the sum which is essentially the degree of the polynomial $p$. Also, since we are allowed geometric terms (i.e., $A^n$) and
$n^{\deg p}$ grows sub-geometrically, (\ref{eqn:equivalence}) allows us to
absorb $B_d(w,\sqrt s)$ into the sum.
To prove the inequalities, fix $s$ and observe that 
\begin{equation}
\label{eqn:denominator}
s B_d(w,\sqrt s) \sim s^2 \sjk |\ajkw| s^{\fjkt} \sim s^2 \max_{j,k\geq1} |\ajkw| s^{\fjkt}  =  s^2|\A{j_1k_1}w| s^{\frac{j_1+k_1}2}
\end{equation}
for some $j_1,k_1\geq 1$. Similarly, for each fixed $n$ (and $s$), 
\begin{equation}
\label{eqn:numerator}
\sjk |\ajkw|^n s^{n\fjkt} n^{n\fjkt} \sim \max |\ajkw|^n s^{n\fjkt} n^{n\fjkt} =  |\A{j_0k_0}w|^n s^{n\frac{j_0+k_0}2}
n^{n\frac{j_0+k_0}{2}}
\end{equation}
for some choice of index $j_0,k_0\geq 1$ (which depends on $n$ and $s$). 
From (\ref{eqn:denominator}), we have
$|\A{j_1k_1}w| s^{\frac{j_1+k_1}2} \geq |\A{j_0k_0}w| s^{\frac{j_0+k_0}2}$.
This inequality, together with (\ref{eqn:denominator}) and (\ref{eqn:numerator}) yield
\begin{eqnarray*}
\frac{1}{sB_d(w,\sqrt s)} \sjk |\ajkw|^n s^{n\fjkt} n^{n\fjkt}
&\sim& \frac{( |\A{j_0k_0}{w}| s^{\frac{j_0+k_0}2} )^n 
n^{n\frac{j_0+k_0}2}} {s^2|\A{j_1k_1}w| s^{\frac{j_1+k_1}2}} \\
&\leq& \frac{( |\A{j_0k_0}{w}| s^{\frac{j_0+k_0}2} )^{n-1} n^{n\frac{j_0+k_0}2}} {s^2}\\
&\leq& \frac{1}{s^2} \sum_{j,k \geq 1} |\ajkw|^{n-1} s^{(n-1)\frac{j+k}{2}} n^{n \frac{j+k}{2}}.
\end{eqnarray*}
This establishes that the first term (up a multiplicative constant) is larger than the second term in  
(\ref{eqn:equivalence}). 
To show that the second term is (up to a multiplicative
constant) larger than the third term in (\ref{eqn:equivalence}), we observe that
\begin{align*}
\label{eqn:reverse}
\frac{1}{sB_d(w,\sqrt s)} \sjk | \ajkw |^n s^{n\fjkt} n^{n\fjkt} 
&\ges \frac{ (\max \{| \ajkw | s^{\fjkt} n^{\fjkt}\})^n  } { s^2\max \{|\ajkw| s^{\fjkt} n^{\fjkt}\} } \\
& \sim \frac 1{s^2} \sjk| \ajkw |^{n-1} s^{(n-1)\fjkt} n^{(n-1)\fjkt} \\
&\geq \frac 1{s^2 n^{\frac 12 \deg p}} \sjk| \ajkw |^{n-1} s^{(n-1)\fjkt} n^{n\fjkt}.
\end{align*}
This establishes (\ref{eqn:equivalence}).

Thus, to show that $|\frac{ \p\H}{\p s}(s,z,w,t)|$ 
satisfies (\ref{eqn:time derivative Boxb heat kernel decay}), 
with $J=J'=0$,
we will show the equivalent condition that there exist constants $C,A>0$
so that 
\begin{equation}\label{eqn:estimate to show to prove theorem}
\|(t+ T(w,z))^n \vp\|_{L^\infty(\R)} \leq \|\Mtp^n \hat \vp\|_{L^1(\R)} \leq \frac{C A^n}{s^2} \sjk | \ajkw |^{n-1} s^{(n-1)\fjkt} n^{n\fjkt}.
\end{equation}

%%%%%%%%%%%%%%%%%%%%%%%%%%%%%%%%%
%
%	SECTION: L^1-QSE
%
%%%%%%%%%%%%%%%%%%%%%%%%%%%%%%%%%
\section{Estimates for $\Mtp^n\Htp(s,z,w)$ and the proof of Theorem  \ref{thm:Boxb heat kernel decay} }
\label{sec:tau deriv estimates}
Since $\Boxtp$ is a self-adjoint operator in $L^2(\C)$, it follows
that $\Htp(s,z,w) = \overline{\Htp(s,w,z)}$ \cite{Rai06h}. 
Thus, the differential operators in $w$ are:
\begin{align*}
\Wbstpw &= \overline{(\Zstpw)}=\frac{\p}{\p \w} 
-  \tau \frac{\p p}{\p \w}= e^{\tau p}\frac{\p p}{\p \w}e^{-\tau p},
& \Wstpw &= \overline{(\Zbstpw)} = 
\frac{\p}{\p w} +  \tau \frac{\p p}{\p w}= e^{-\tau p}\frac{\p p}{\p w}e^{\tau p}.
\end{align*}

The goal of the remainder of the paper is to show the following theorem.
%
%	Theorem: H_{tau p} satisfies QSE
%
\begin{thm}\label{thm:Htp satisfies QSE}
Let $p:\C\to\R$ be a subharmonic, nonharmonic polynomial, $\tau>0$, and $n\geq 0$. Let $c_0$ be as in
(\ref{eqn:estimate for Htp}).
There exists constants $C>0$ so that
\begin{enumerate}\renewcommand{\labelenumi}{(\roman{enumi})}
\item 
\[
|(\Mtp^{z,w})^n\Htp(s,z,w)| \leq
\begin{cases} \displaystyle \frac{C^n}{s \tau^2} e^{-c_0\frac{|z-w|^2}{2s}}\sjk |\ajkz|^{n-2} s^{(n-2)\fjkt} n^{n\fjkt} \\
\displaystyle \frac{C^n}{s}e^{-c_0\frac{|z-w|^2}{2s}}\sjk |\ajkz|^n s^{n\fjkt} n^{n\fjkt}
\end{cases}
\]
\item If $X = \Zbstpz, \Zstpz, \Wbstpw$, or $\Wstpw$, then
\[
|X (\Mtp^{z,w})^n\Htp(s,z,w)| \leq
\begin{cases} \displaystyle \frac{C^n}{s^{3/2}\tau^2} e^{-c_0\frac{|z-w|^2}{2s}}\sjk |\ajkz|^{n-2} s^{(n-2)\fjkt} n^{n\fjkt} \\
\displaystyle \frac{C^n}{s^{3/2}}e^{-c_0\frac{|z-w|^2}{2s}}\sjk |\ajkz|^n s^{n\fjkt} n^{n\fjkt}
\end{cases}
\]
\item If
$X^2  = \Wstpw\Wbstpw$ or $X = \Zstpz\Wbstpw$, then
\[
|X^2 (\Mtp^{z,w})^n\Htp(s,z,w)| \leq
\begin{cases} \displaystyle \frac{C^n}{s^2\tau^2} e^{-c_0\frac{|z-w|^2}{4s}}\sjk |\ajkz|^{n-2} s^{(n-2)\fjkt} n^{n\fjkt} \\
\displaystyle \frac{C^n}{s^2}e^{-c_0\frac{|z-w|^2}{4s}}\sjk |\ajkz|^n s^{n\fjkt} n^{n\fjkt}
\end{cases}
\]
\end{enumerate}
\end{thm}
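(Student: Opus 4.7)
The plan is to iterate Duhamel's formula in the parameter $\tau$. First observe that $\Mtpzw = e^{i\tau T(w,z)}\partial_\tau e^{-i\tau T(w,z)}$, so
\[
(\Mtpzw)^n \Htp(s,z,w) = e^{i\tau T(w,z)}\partial_\tau^n\bigl(e^{-i\tau T(w,z)}\Htp(s,z,w)\bigr),
\]
and since $|e^{i\tau T(w,z)}|=1$ it suffices to estimate the $\tau$-derivatives of the twisted heat kernel. Differentiating $(\partial_s + \Boxtp)\Htp = 0$ in $\tau$ produces
\[
(\partial_s + \Boxtp)\partial_\tau \Htp = -(\partial_\tau \Boxtp)\Htp,
\]
where $\partial_\tau\Boxtp = p_{z\z} + p_z \partial_\z - p_\z\partial_z + 2\tau|p_z|^2$ is first order with polynomial coefficients depending linearly on $\tau$. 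Duhamel's formula then gives an integral representation for $\partial_\tau \Htp$, and iterating $n$ times yields a sum of $n$-fold iterated integrals in which $n$ copies of $\partial_\tau \Boxtp$ are interleaved with $n{+}1$ factors of $\Htp$. Organizing this recursion and indexing the summands by the distribution of spatial derivatives across kernel factors will be the content of Section \ref{sec:a good formula for MH}.

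To estimate each term of the recursion, I will invoke the baseline Gaussian bound from (\ref{eqn:estimate for Htp}) on $\Htp$, together with the fact that a single spatial derivative $\p_z$ or $\p_\z$ applied to a heat kernel at time of order $s/n$ costs $(s/n)^{-1/2}$. The coefficients appearing in $\partial_\tau \Boxtp$, namely $p_{z\z}$, $p_z$, $p_\z$, and $|p_z|^2$, are polynomials in $z,\z$ whose size on the Gaussian-effective region $|\xi-z|\lesssim \sqrt{s}$ is controlled by the sums $\sum_{j,k\geq 1}|\ajkz|s^{(j+k)/2}$ (this is the same mechanism that yields the $\Lambda(z,\sqrt{s})$ bounds in Section \ref{sec:results}). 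Multiplying together $n$ such coefficient bounds and using that the time-simplex volume is $s^n/n!$ produces the expression $|\ajkz|^n s^{n(j+k)/2}n^{n(j+k)/2}$: Stirling's formula converts the derivative penalty $(s/n)^{-n/2}$ combined with the $1/n!$ into the $n^{n(j+k)/2}$ factor on the right-hand side.

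The sharper first bound carrying the prefactor $\tau^{-2}$ comes from exploiting the quadratic-in-$\tau$ term $2\tau|p_z|^2$: peeling off two consecutive applications of $\partial_\tau\Boxtp$ and integrating by parts against the self-adjoint $\Boxtp$, we may trade two powers of $|\ajkz|s^{(j+k)/2}$ for a factor $\tau^{-2}$, which explains the shift from $n$ to $n{-}2$ in the exponent of the sum. For the derivative estimates in parts (ii) and (iii), I will use the semigroup identity $\Htp(s) = \Htp(s/2)\circ\Htp(s/2)$, placing the derivative $X$ on a half-time factor and invoking the standard $s^{-1/2}$ smoothing; the Gaussian decay constant then becomes $c_0/2$ by the triangle inequality $|z-\xi|^2 + |\xi - w|^2 \geq \tfrac12|z-w|^2$. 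Part (iii) requires an analogous three-fold splitting $\Htp(s) = \Htp(s/4)\circ\Htp(s/2)\circ\Htp(s/4)$ so that the two derivatives can be deposited on separate half-time factors, accounting for the further loss from $c_0/2$ to $c_0/4$.

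The main obstacle will be the combinatorial bookkeeping in the iterated Duhamel expansion: each of the $n$ insertions of $\partial_\tau\Boxtp$ offers several choices of which polynomial coefficient is produced and whether a spatial derivative lands on the left or right kernel factor, and I must show that after summing over all $C^n$ arrangements the polynomial coefficient factors can be regrouped into the single power $|\ajkz|^n$ summed over $j,k\geq 1$ (rather than an $n$-fold independent sum) while the time-simplex integration contributes exactly the advertised power of $s$. Verifying that the $\tau^{-2}$ gain can be extracted cleanly for \emph{every} term of the expansion, not just the leading one, is where the care in Section \ref{sec: proof of QSE thm} will be concentrated.
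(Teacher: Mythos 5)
Your overall strategy -- iterate Duhamel's formula in $\tau$ and estimate the resulting $n$-fold integral with Gaussian convolutions and Stirling -- is the same as the paper's. However, there is a genuine gap in the step that makes the whole thing work. You announce you will estimate $\partial_\tau^n\bigl(e^{-i\tau T(w,z)}\Htp\bigr)$, but the Duhamel iteration you then set up is for the \emph{untwisted} derivative: differentiating $(\partial_s + \Boxtp)\Htp = 0$ gives the forcing term $-(\partial_\tau\Boxtp)\Htp$, whose coefficients are $p_z, p_\z, p_{z\z}$. These do \emph{not} satisfy the bound you claim. The sum $\sum_{j,k\geq 1}|\ajkz|s^{(j+k)/2}$ only controls the mixed $(j\geq1,k\geq1)$ Taylor coefficients of $p$; it does not control $p_z(\xi)$ near $\xi = z$ (or $w$), because $p_z(z) = a_{10}^z$ is a nonzero constant with no $s$-decay whatsoever. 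What the paper actually needs, and exploits, is the commutator $[\Boxtp,\Mtpzw]$, whose coefficients are $e(w,z)$ and $\overline{e(w,z)}$ -- these \emph{vanish} at $w=z$, and it is precisely the $-iT(w,z)$ piece in $\Mtpzw$ that cancels the constant Taylor terms of $p_z, p_\z$ to produce this vanishing. Applying $(\partial_s + \Boxtpz)$ to $\Mtpzw\Htp$ directly (commuting past $\Boxtp$) yields the correct forcing term $[\Boxtpz,\Mtpzw]\Htp$. Iterating that introduces further double commutators $[\Mtp,[\Boxtp,\Mtp]] = -2|e(w,z)|^2$, producing a two-branch (Fibonacci) recursion; your scheme, if it converged, would miss these and also the bookkeeping control on the combinatorics would differ.

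Your mechanism for the $\tau^{-2}$ gain -- integration by parts against $\Boxtp$ to trade two $|\ajkz|s^{(j+k)/2}$ factors for $\tau^{-2}$ -- is also not what works. The paper obtains the decay from the factor $e^{-c_0\,s/\mu(w,1/\tau)^2}$ already present in the pointwise bound \eqref{eqn:estimate for Htp} from [Rai06h]: pairing one $|e(0,\xi_j)|$ against that factor and using $\Lambda\bigl(\xi,\mu(\xi,1/\tau)\bigr)\leq C/\tau$ produces a single $1/\tau$, and doing this for two adjacent kernel factors gives $\tau^{-2}$ while simultaneously shifting the sum exponent from $n$ to $n-2$. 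There is no integration by parts involved, and I do not see how your version avoids losing control of the time integrals. For parts (ii) and (iii) the paper does not use the semigroup splitting $\Htp(s)=\Htp(s/2)\circ\Htp(s/2)$: it places the spatial derivative directly on the first or last kernel factor in the Duhamel formula and, when necessary (for the two-derivative case and for $\partial\Htp/\partial s$), integrates by parts in $\xi$ to move the $\Zstp$'s off the terminal factor. Your splitting idea might also work, but the estimate of the key intermediate object (the coefficients appearing in the iterated integrand) is where the argument breaks down as currently written.
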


% Remark
\begin{remark} The argument we give assumes $n\geq 3$. However, 
the $n\leq 2$ case follows from \cite{Rai10h}.
While the bounds in \cite{Rai10h} have better decay in $s$ and $|z-w|$ than in Theorem
\ref{thm:Htp satisfies QSE}, the constants depend on $n$ in an unknown way, hence we need the more careful argument presented here.

Also, observe that 
\[
n^{n\frac{j+k}2} 
\leq A^n (n-1)^{(n-1)\frac{j+k}2}, 
\]
for a suitable constant $A$.
This means that we have flexibility in the statement of Theorem \ref{thm:Htp satisfies QSE} in the sense that $(n-2)^{n-2}$ could be replaced by
$n^n$ (or $(n-1)^{n-1}$), etc. 
\end{remark}

% Remark: exponential reduction trick
\begin{rem} One trick that we use repeatedly  is the fact that for any $\ep>0$ and $n\geq 0$, 
there exists a constant
$C_{\ep,n}$ so that 
\begin{equation}
\label{eqn:al1}
e^{-c \frac ab} \leq C_{\ep,n} e^{-(1-\ep)c \frac ab} \frac{b^n}{a^n}.
\end{equation} 
We will use this inequality by either commenting we may need to decrease
$c$ for a subsequent inequality to hold true or we may simply and mysteriously halve the constant in the exponential.
\end{rem}

Theorem \ref{thm:Htp satisfies QSE} allows us to prove Theorem \ref{thm:Boxb heat kernel decay}. 
\begin{proof}[Proof of Theorem \ref{thm:Boxb heat kernel decay}.]
The reason that we estimate $|(\Mtpzw)^n\frac{\p \Htp}{\p s}(s,z,w)|$ first is that we can reduce the integral to the case when $\tau>0$. To see
how this works, we recall an observation from \cite{Rai06h}. Since $\Boxtp^k\Zbstp =  \Zbstp\Boxwtp^k$ for all $k\geq 0$, it follows that
\[
 e^{-s\Boxtp}\Zbstp =\Zbstp e^{-s\Boxwtp}.
\]
On the kernel side, if $dw$ is Lebesgue measure on $\R^2=\C$, then 
\[
e^{-s\Boxtp}\Zbstp \vp(z) = \int_\C \Htp(s,z,w)\Zbstpw\vp(w)\, dw = -\int_\C \Wbstpw\Htp(s,z,w)\vp(w)\, dw
\]
and
\[
\Zbstpz e^{-s\Boxwtp}\vp(z) = \int_\C \Zbstpz\Hwtp(s,z,w)\vp(w)\, dw.
\]
Thus,
\begin{equation}\label{eqn:relating H and H wiggle}
-\Wbstpw\Htp(s,z,w) = \Zbstpz\Hwtp(s,z,w).
\end{equation}
Since  $M_{-\tau p}^{z,w} = \overline{\Mtpzw}$,
by (\ref{eqn:HtwiddleH}) and (\ref{eqn:relating H and H wiggle}), 
we have (for $\tau>0$),
\begin{align}\label{eqn: M^n H tau<0 with M^n H tau>0}
\overline{(M_{-\tau p}^{z,w})^n \frac{\p H_{-\tau p}}{\p s}(s,z,w)}
=  (\Mtpzw)^n\frac{\p}{\p s} \Hwtp(s,z,w) 
&=   (\Mtpzw)^n \Zstpz\Zbstpz\Hwtp(s,z,w) \nn \\
&= - (\Mtpzw)^n \Zstpz\Wbstpw\Htp(s,z,w) 
\end{align}
As a consequence of (\ref{eqn: M^n H tau<0 with M^n H tau>0}), we have successfully reduced to the estimate on 
$|(\Mtpzw)^n\frac{\p \Htp}{\p s}(s,z,w)|$ for $\tau\in\R$ to an estimate on $|(\Mtpzw)^n\frac{\p \Htp}{\p s}(s,z,w)|$ for $\tau>0$.

With $X^2$ as in \emph{(iii)}, 
we need to  show that we can estimate of $(\Mtpzw)^nX^2\Htp(s,z,w)$ using Theorem \ref{thm:Htp satisfies QSE}. We handle one derivative
at a time. Assume that $X$ as in \emph{(ii)} of Theorem \ref{thm:Htp satisfies QSE}. 
Let $e(w,z) = \sum_{j\geq 1} \frac{1}{j!}\frac{\p^{j+1} p(z)}{\p z^j \p\z}(w-z)^j$. From Proposition 5.6
in \cite{Rai10h}, \[
e(w,z) = -\sum_{\atopp{j\geq 1}{k\geq 0}} \frac{1}{j!k!} \frac{\p^{j+k+1}p(w)}{\p w^j\p\w^{k+1}}(z-w)^j\overline{(z-w)}{}^k.
\]
and 
\begin{equation}
\label{eqn:al2}
e(w,z)= -[\Mtp,\Zbstp].
\end{equation}

We can write
\begin{align*}
|(\Mtpzw)^n X\Htp(s,z,w)| &\leq |X(\Mtpzw)^{n}\Htp(s,z,w)| + \sum_{j=0}^{n-1} |(\Mtpzw)^{n-1-j} [\Mtpzw,X] (\Mtpzw)^j\Htp(s,z,w)| \\
&= |X(\Mtpzw)^{n}\Htp(s,z,w)| + n|e(w,z)| |(\Mtpzw)^{n-1}\Htp(s,z,w)|
\end{align*}
Certainly, the only term to estimate is $|e(w,z)| |(\Mtpzw)^{n-1}\Htp(s,z,w)|$
Using \emph{(i)}, we have
\begin{align*}
|e(w,z)|& |(\Mtpzw)^{n-1}\Htp(s,z,w)| \\
&\leq \sab |\A{\alpha\beta}z| |w-z|^{\alpha+\beta-1} e^{-c_0 \frac{|z-w|^2}{2s}} \frac{C^{n-1}}{s} \sjk |\ajkz|^{n-1} s^{(n-1)\frac{j+k}2} (n-1)^{(n-1)\frac{j+k}2} ]\\
&\leq e^{-c_0 \frac{|z-w|^2}{4s}} \sab |\A{\alpha\beta}z| s^{\frac{\alpha+\beta-1}2}  \frac{C^{n-1}}{s} \sjk |\ajkz|^{n-1} s^{(n-1)\frac{j+k}2} (n-1)^{(n-1)\frac{j+k}2}\\
&\leq e^{-c_0 \frac{|z-w|^2}{4s}}   \frac{C^{n-1}}{s^{3/2}} \sjk |\ajkz|^{n} s^{n\frac{j+k}2} n^{n\frac{j+k}2}.
\end{align*}
Thus $(\Mtpzw)^n X\Htp(s,z,w)$ satisfies the estimate
\emph{(ii)} in Theorem \ref{thm:Htp satisfies QSE} for some uniform constant $c_0$.
By similar arguments, we can show that if $X^2$ is as in \emph{(iii)} of Theorem \ref{thm:Htp satisfies QSE}, then \break
$(\Mtpzw)^n X^2\Htp(s,z,w)$ satisfies the estimates given in \emph{(iii)} of Theorem \ref{thm:Htp satisfies QSE} when $\tau>0$ by cutting $c_0$ in half (again). 
Next, since $\frac{\p \Htp}{\p s}(s,z,w) = -\Wstpw\Wbstpw\Htp(s,z,w)$, it follows from the previous paragraph that
$(\Mtpzw)^n  \frac{\p \Htp}{\p s}(s,z,w)$ satisfies the estimates in \emph{(i)} of  Theorem \ref{thm:Htp satisfies QSE} for all $\tau$, 
both positive and negative
(up to a modification of $c_0$). 

Now we integrate this estimate in $\tau$. Observe that
\[
\int_0^\infty \min\{ |\ajkz|^2 s^{2\fjkt}, \tau^{-2}\}\, d\tau
= \int_0^{(|\ajkz|s^{\fjkt})^{-1}} |\ajkz|^2 s^{j+k}\, d\tau +
\int_{(|\ajkz|s^{\fjkt})^{-1}}^\infty \tau^{-2}\, d\tau = 2|\ajkz| s^{\fjkt}
\]
Using this together with the estimate for $(\Mtpzw)^n \Htp(s,z,w)$ in part \emph{(i)} of 
Theorem \ref{thm:Htp satisfies QSE} (for all $\tau$), we have
\[
\int_{-\infty}^\infty |(\Mtp^{z,w})^n\frac{\p\Htp}{\p s}(s,z,w)|\, d\tau
\leq \frac{C^n}{s^2}e^{-c_0 \frac{|z-w|^2}{4s}} \sjk |\ajkz|^{n-1} s^{(n-1)\fjkt} n^{n\fjkt}.
\]
By \eqref{eqn:estimate to show to prove theorem}, this proves the following estimate:
\begin{equation}
\label{eqn:al3}
\bigg|\frac{\p}{\p s}\H(s,\alpha,\beta)\bigg| 
\leq C \frac{e^{-c \frac{d(\alpha,\beta)^2}s}} {s |B_d(\alpha,\sqrt s)|}.
\end{equation}

To recover the estimate for $\H(s,\alpha,\beta)$, we use the Fundamental Theorem of Calculus
and the fact that $\H(0,\alpha,\beta)=0$ away from the diagonal. If we set $d = d(\alpha,\beta)$ and consider the case
$s \geq \frac12 c_0 d^2$, 
then we estimate (with replacing $c_0$ by a smaller
constant $c$ using (\ref{eqn:al1})),
\begin{align*}
|\H(s,\alpha,\beta)| &= \bigg| \int_0^s \frac{\p \H}{\p s}(r,\alpha,\beta)\, dr \bigg| \leq
\int_0^{d^2} \frac{C}{d^2(d^2\Lambda(z,d))} e^{-c \frac{d^2}s}\, dr + \int_{d^2}^s \frac{C}{r^2\Lambda(z,d)} e^{-c \frac{d^2}s}\, dr\\
&\leq \frac{C}{V(\alpha,\beta)} e^{-c \frac{d(\alpha,\beta)^2}s}.
\end{align*}
If $s \leq \frac 12 c_0 d^2$, then we estimate (with $c_0$ replaced by the smaller $c$  using (\ref{eqn:al1})) that
\[
|\H(s,\alpha,\beta)| = \bigg| \int_0^s \frac{\p \H}{\p s}(r,\alpha,\beta)\, dr \bigg| \leq
\int_0^{s} \frac{C}{r V(\alpha,\beta)} e^{-c \frac{d^2}{2r}}\, dr. 
\]
If we set $f(r) = \frac 1r e^{-c \frac{d^2}{2r}}$, then calculus shows that $f'(r)\geq 0$ when $r\leq \frac 12 c_0 d^2$. This means
\[
|\H(s,\alpha,\beta)| 
\leq
\frac{C}{V(\alpha,\beta)} \int_0^{s} f(r)\, dr\leq \frac{C}{V(\alpha,\beta)} s f(s) = \frac{C}{V(\alpha,\beta)} e^{-c_0 \frac{d^2}{2s}}.
\]

The passage from estimates on  $\H(s,\alpha,\beta)$ to estimates on $X^J_\alpha X^{J'}_\beta \H(s,\alpha,\beta)$ involves a short bootstrapping argument
and Theorem 3.4.2 from \cite{NaSt01h}, a Sobolev embedding theorem. Fix $s>0$ and $\beta\in\C\times\R$. We first bound derivatives
only in $\alpha$. From \cite{NaSt01f}, there exists
a bump function $\vp\in C^\infty_c(B_d(\alpha,\frac 12 d(\alpha,\beta)))$ so that $\vp(\gamma)=1$ on 
$B_d(\alpha, \frac 14 d(\alpha,\beta))$, $0\leq \vp\leq 1$ and for every multiindex $I$, $|X^I \vp| \leq \frac{c_{|I|}}{d(\alpha,\beta)^{|I|}}$ where $c_{|I|}$ is
independent of $\alpha$ and $d(\alpha,\beta)$. 
We now use Theorem 3.4.2 from \cite{NaSt01h} (and note that we may take $R_0=\infty$) and estimate that for some $C>0$ and $L\in\N$,
\begin{align}
|X^I_\alpha \H(s,\alpha,\beta)| &= |\vp(\alpha)X^I_\alpha \H(s,\alpha,\beta)| \nn \\
&\leq \frac{C}{V(\alpha,\beta)^{1/2}} \sum_{0 \leq|J|\leq L} d(\alpha,\beta)^{|J|} 
\big \|X^J_\alpha \big( \vp X^I_\alpha \H(s,\cdot ,\beta)\big) \big\|_{L^2(\C\times\R)}. \label{eqn:X^I H Sobolev}
\end{align}
The derivatives in this estimation are taken with respect to $\alpha$ and we will henceforth omit the subscript.
We integrate by parts using the fact that $(X^0)^* = -X^1$ (and $(X^1)^* = -X^0$) and obtain
\begin{align}
\big\| X^J \big( \vp X^I \H\big) \big\|_{L^2}^2
&= \la  X^J \big( \vp X^I \H\big),  X^J \big( \vp X^I \H\big) \ra
= \la   \H, (X^I)^*  \big( \vp  (X^J)^* X^J \big( \vp X^I \H\big)\big) \ra \nn \\
&\leq \| \H \|_{L^\infty(\supp\vp)} V(\alpha,\beta)^{1/2} \big\| (X^I)^* \big( \vp  (X^J)^* X^J \big( \vp X^I \H\big)\big) \big\|_{L^2}.
\label{eqn:X^J X^I H in L^2}
\end{align}
Since $d(\gamma,\beta) \geq \frac 12 d(\alpha,\beta)$ for $\gamma \in \supp\vp$,
\[
\big| (X^I)^*\big( \vp  (X^J)^* X^J \big( \vp X^I \H\big)\big) \big|
\leq C \sum_{|I_1|+|I_2|+|I_3|=2|I|+2|J|} |X^{I_1}\vp| |X^{I_2}\vp| |X^{I_3}_\gamma\H(s,\gamma,\beta)|,
\]
and from \cite{NaSt01h} or \cite{Rai10h}, 
$|X^{I_3}_\gamma\H(s,\gamma,\beta)| \leq C_{|I_3|} d(\gamma,\beta)^{-|I_3|} V(\gamma,\beta)^{-1}$, it follows that
\begin{align*}
\big\| (X^I)^*  \big( \vp  (X^J)^* X^J \big( \vp X^I \H\big)\big) \big\|_{L^2}
&\leq \big\| (X^I)^* \big( \vp  (X^J)^* X^J \big( \vp X^I \H\big)\big) \big\|_{L^\infty}V(\alpha,\beta)^{1/2}\\
&\leq \frac{C}{d(\alpha,\beta)^{2|I|+2|J|} V(\alpha,\beta)^{1/2}}.
\end{align*}
Using the estimate on $\H(s,\gamma,\beta)$ proven above, we have that on $\supp\vp$, 
$|\H(s,\gamma,\beta)| \leq C \frac{e^{-c \frac{d(\alpha,\beta)^2}{2s}}}{V(\alpha,\beta)}$, so plugging our estimate on 
$\big\| X^I  \big( \vp  X^J X^J \big( \vp X^I \H\big)\big) \big\|_{L^2}$ into \eqref{eqn:X^J X^I H in L^2}
and that into \eqref{eqn:X^I H Sobolev}, we get (with a further decrease in $c$) that
\begin{align*}
|X^I_\alpha \H(s,\alpha,\beta)|
&\leq \frac{C}{V(\alpha,\beta)^{1/2}} \sum_{0 \leq|J|\leq L} d(\alpha,\beta)^{|J|} \frac{e^{-c \frac{d(\alpha,\beta)^2}s}}{V(\alpha,\beta)^{1/2}V(\alpha,\beta)^{1/4}}
\frac{1}{d(\alpha,\beta)^{|I|+|J|}} V(\alpha,\beta)^{1/4} \\
&\leq \frac{C}{d(\alpha,\beta)^{|I|}} \frac{e^{-c \frac{d(\alpha,\beta)^2}s}}{V(\alpha,\beta)},
\end{align*}
the desired estimate. To pass from estimates on $X^J_\alpha\H(s,\alpha,\beta)$ to estimates on $X^J_\alpha X^{J'}_\beta\H(s,\alpha,\beta)$, we simply
repeat the argument in $\beta$ with  $X^J_\alpha \H(s,\alpha,\beta)$ playing the role of $\H(s,\alpha,\beta)$. Finally, since
$\frac{\p^j}{\p s^j}\H(s,\alpha,\beta) = (-1)^j\Box_b^j \H(s,\alpha,\beta)$, proving the estimates for $X^J_\alpha X^{J'}_\beta\H(s,\alpha,\beta)$ is 
sufficient to prove the theorem.
\end{proof}

\begin{rem} The estimates  in Theorem \ref{thm:Htp satisfies QSE} allow us to prove that
$e^{-i\tau T(w,z)}\Htp(s,z,w)$ satisfies $L^1$-QSE for every $\beta = j+k$ where $j\geq 1$, $k\geq 1$ (of course, for 
$j+k\geq \deg p$, the condition is vacuous). The exponential decay for $H(s,\alpha,\beta)$ follows by proving the $L^1$-QSE and
keeping careful track of the powers of $s$ and $|\ajko|$. 
\end{rem}

%%%%%%%%%%%%%%%%%%%%%
%
%	A good formula for Mtp H
%
%%%%%%%%%%%%%%%%%%%%
\section{A good formula for $\Mtpzw\Htp(s,z,w)$}
\label{sec:a good formula for MH}
The goal of this section is to prove a tractable formula for $\Mtpzw\Htp(s,z,w)$.
The launching point is the solution to the nonhomogeneous heat equation in \cite{Rai10h} given by a 
Duhamel's formula.

Proposition 5.1 in \cite{Rai10h} yields
% Proposition: nonhomogeneous IVP
\begin{prop}\label{prop:IVP} 
Let $g:(0,\infty)\times\C\to\C$ and $f:\C\to\C$ be $L^2(\C)$ 
for each $s$ and vanish as
$|z|\to\infty$.
The solution to the nonhomogeneous heat equation
\begin{equation}\label{eqn:nonhomog IVP}
\begin{cases} \displaystyle \frac{\p u}{\p s} + \Boxtp u = g \text{  in } (0,\infty)\times\C
\vspace{.075in}\\
\displaystyle \lim_{s\to 0} u(s,z) = f(z) \end{cases}
\end{equation}
is given by
\[
u(s,z) = \int_\C \Htp(s,z,\xi) f(\xi)\, d\xi + \int_0^s \int_\C \Htp(s-r,z,\xi) g(r,\xi)\, d\xi dr.
\]
\end{prop}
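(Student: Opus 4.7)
The plan is a standard Duhamel argument: split $u = v+w$, where
\[
v(s,z) = \int_\C \Htp(s,z,\xi) f(\xi)\, d\xi, \qquad
w(s,z) = \int_0^s \int_\C \Htp(s-r,z,\xi) g(r,\xi)\, d\xi\, dr,
\]
and verify separately that $v$ solves the homogeneous problem with initial datum $f$ while $w$ solves the nonhomogeneous problem with zero initial datum. Summing the two and using linearity of $\p_s + \Boxtp$ yields the proposition.

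First I would treat $v$: by definition $\Htp$ is the kernel of the semigroup $e^{-s\Boxtp}$, so differentiation under the integral combined with the identity $(\p_s + \Boxtpz)\Htp(s,z,\xi) = 0$ off the diagonal gives $(\p_s + \Boxtp)v = 0$ on $(0,\infty)\times\C$. The initial condition $v(0,z) = f(z)$ follows from the semigroup continuity $e^{-s\Boxtp}f\to f$ as $s\to 0^+$ together with the pointwise heat kernel bounds and the stated decay of $f$. The only inputs here are standard and use nothing beyond what is already recalled in Section~\ref{sec:intro}.

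The key step is the particular solution $w$. Because $\Htp(s-r,\cdot,\cdot)$ is singular at $r=s$, I would regularize by setting
\[
w_\ep(s,z) = \int_0^{s-\ep} \int_\C \Htp(s-r,z,\xi)\, g(r,\xi)\, d\xi\, dr,
\]
applying the Leibniz rule in $s$, and sending $\ep\to 0^+$. The $s$-derivative breaks into a boundary contribution
\[
\int_\C \Htp(\ep,z,\xi)\, g(s-\ep,\xi)\, d\xi,
\]
which tends to $g(s,z)$ because $\Htp(\ep,\cdot,\cdot)$ acts as an approximate identity as $\ep\to 0^+$ (this is where the $L^2$ and decay hypotheses on $g$ enter), plus an interior contribution
\[
\int_0^{s-\ep} \int_\C \p_s \Htp(s-r,z,\xi)\, g(r,\xi)\, d\xi\, dr = -\int_0^{s-\ep} \int_\C \Boxtpz \Htp(s-r,z,\xi)\, g(r,\xi)\, d\xi\, dr,
\]
which converges to $-\Boxtpz w(s,z)$; passing $\Boxtpz$ outside the integral is legitimate thanks to the pointwise Gaussian-type bounds on $\Htp$ and its spatial derivatives already recorded in the cited references. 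Combining the two limits gives $\p_s w + \Boxtpz w = g$, and $w(0,z)=0$ is immediate from the definition.

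The main obstacle will be the careful justification of the interchange of limit, $s$-derivative, and integration near the diagonal $r=s$, and in particular identifying the boundary term with $g(s,z)$. Once one has a dominating function coming from the known heat kernel estimates and the $L^2$-with-decay hypotheses on $f$ and $g$, dominated convergence makes both the convergence $w_\ep\to w$ and the convergence of the differentiated expressions rigorous, completing the proof.
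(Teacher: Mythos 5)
The paper does not prove this proposition itself; it simply cites Proposition~5.1 of \cite{Rai10h}. Your proof is the standard Duhamel argument: split $u = v + w$, verify $v$ solves the homogeneous problem via the semigroup/kernel properties of $\Htp$, and verify $w$ by truncating the time integral at $r = s - \ep$, applying the Leibniz rule (producing the boundary term $\int_\C \Htp(\ep,z,\xi)g(s-\ep,\xi)\,d\xi \to g(s,z)$ by approximate-identity behavior, and the interior term $\to -\Boxtpz w$ using $\p_s\Htp = -\Boxtpz\Htp$), and then passing to the limit $\ep\to 0^+$ under the known Gaussian-type bounds on $\Htp$. This is the expected route and the logic is sound; it matches what one would anticipate the cited reference to do, so there is nothing genuinely different here to compare.

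One small caution worth flagging explicitly: the dominated-convergence justification for moving $\Boxtpz$ past the $\xi$- and $r$-integrals and for the $\ep\to 0$ limit does require the second-order spatial derivative bounds on $\Htp$ (the $s^{-2}$-rate estimates, as in \eqref{eqn:Zbstp Htp estimates} and its second-order analogues), not merely the zeroth-order bound \eqref{eqn:estimate for Htp}; you gesture at this ("spatial derivatives already recorded in the cited references") but should make sure the integrability of $(s-r)^{-2}$ near $r=s$ against the truncation is actually controlled, since for $\ep\to 0$ the interior term's integrand has a nonintegrable singularity unless one keeps the boundary contribution paired with it, which is precisely why the Leibniz/boundary-term bookkeeping must be done before, not after, sending $\ep\to 0$. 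Your structure does handle this in the right order, so the argument goes through.
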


Observe that $(\Mtp^{z,w})^n\Htp(s,z,w)$ behaves as follows.
\begin{align*}
g&:= \Big(\frac{\p}{\p s} + \Boxtpz \Big)(\Mtp^{z,w})^n\Htp(s,z,w)
= \Mtp^n \frac{\p \Htp}{\p s} + \Mtp\Boxtp\Mtp^{n-1}\Htp + [\Boxtp,\Mtp]\Mtp^{n-1}\Htp \\
&= \Mtp^n \frac{\p \Htp}{\p s} + \Mtp^2\Boxtp\Mtp^{n-2}\Htp + \Mtp[\Boxtp,\Mtp]\Mtp^{n-2}\Htp + [\Boxtp,\Mtp]\Mtp^{n-1}\Htp \\
&= \cdots = \underbrace{ \Mtp^n \frac{\p \Htp}{\p s} + \Mtp^n \Boxtp\Htp}_{=0}
+ \Mtp^{n-1}[\Boxtp,\Mtp]\Htp + \Mtp^{n-2}[\Boxtp,\Mtp]\Mtp\Htp \\
&\hspace{2.5in} + \cdots \Mtp[\Boxtp,\Mtp]\Mtp^{n-2} + [\Boxtp,\Mtp]\Mtp^{n-1}\Htp.
\end{align*}
From \cite{Rai10h}, Proposition 5.4 we have
\begin{multline}\label{eqn:Box M commutator}
[\Boxtp,\Mtpzw] = \Zbstpz \overline{e(w,z)} - e(w,z)\Zstpz = \Zbstpz \overline{e(w,z)} - \Zstpz e(w,z) + \frac{\p^2 p(z)}{\p z \p\z} \\
= -\frac{\p^2 p}{\p z\p\z} - e(w,z)\Zstpz + \overline{e(w,z)}\Zbstpz.
\end{multline}
To simplify the calculation further,  observe
\begin{align*}
\Mtp[\Boxtp,\Mtp] &= [\Boxtp,\Mtp]\Mtp + \big[\Mtp,[\Boxtp,\Mtp]\big] \\
&= [\Boxtp,\Mtp]\Mtp + \big[\Mtp,-\frac{\p^2 p}{\p z\p\z} - e(w,z)\Zstpz + \overline{e(w,z)}\Zbstpz] \\
&= [\Boxtp,\Mtp]\Mtp - e(w,z)[\Mtp,\Zstp] + \overline{e(w,z)}[\Mtp,\Zbstp] \\
&= [\Boxtp,\Mtp]\Mtp - 2|e(w,z)|^2.
\end{align*}
where the next to last equality uses (\ref{eqn:al2}).
Consequently,
\begin{align*}
\Mtp^j[\Boxtp,\Mtp] &= \Mtp^{j-1}(\Mtp[\Boxtp,\Mtp])= \Mtp^{j-1}\big( [\Boxtp,\Mtp]\Mtp - 2|e(w,z)|^2\big) \\
&= \Mtp^{j-2} [\Boxtp,\Mtp]\Mtp^2 - 2\Mtp^{j-2}|e(w,z)|^2 \Mtp - 2 \Mtp^{j-1}|e(w,z)|^2 \\
&= \Mtp^{j-2} [\Boxtp,\Mtp]\Mtp^2 - 4|e(w,z)|^2\Mtp^{j-1} \\
&= \cdots = [\Boxtp,\Mtp]\Mtp^j - 2j|e(w,z)|^2 \Mtp^{j-1}.
\end{align*}
Therefore,
\begin{align*}
g=\sum_{j=0}^{n-1} \Mtp^j [\Boxtp,\Mtp]\Mtp^{n-1-j}\Htp
&= \sum_{j=0}^{n-1}\Big( [\Boxtp,\Mtp]\Mtp^j - 2j|e(w,z)|^2\Mtp^{j-1}\Big) \Mtp^{n-1-j}\Htp \\
&= \sum_{j=0}^{n-1}[\Boxtp,\Mtp]\Mtp^{n-1}\Htp - |e(w,z)|^2\Mtp^{n-2}\Htp\sum_{j=1}^{n-1}2j \\
&= n[\Boxtp,\Mtp]\Mtp^{n-1}\Htp - n(n-1)|e(w,z)|^2\Mtp^{n-2}\Htp.
\end{align*}
 
From Theorem 6.3 in \cite{Rai10h}, it follows that the single integral term in Proposition \ref{prop:IVP} is 0, so we have: 
% Proposition : n derivatives, no recursion
\begin{prop}\label{prop:n derivs, no recursion}
\begin{multline}\label{eqn:n derivs, no recursion}
(\Mtp^{z,w})^n\Htp(s,z,w) = n\int_0^s\int_\C \Htp(s-r,z,\xi) [\Boxtpxi,\Mtp^{\xi,w}] (\Mtp^{\xi,w})^{n-1}\Htp(r,\xi,w)\,
d\xi\, dr \\
-n(n-1) \int_0^s\int_\C \Htp(s-r,z,\xi) |e(w,\xi)|^2 (\Mtp^{\xi,w})^{n-2}\Htp(r,\xi,w)\,
d\xi\, dr.
\end{multline}
\end{prop}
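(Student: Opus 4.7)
The plan is to apply the Duhamel formula of Proposition \ref{prop:IVP} to the function $u(s,z) := (\Mtpzw)^n\Htp(s,z,w)$, with $w$ regarded as a parameter. This reduces the proof to two subtasks: (a) computing the forcing term $g(s,z) := (\partial_s + \Boxtpz)u(s,z)$ in a form that matches the right-hand side of \eqref{eqn:n derivs, no recursion}, and (b) verifying that the initial-data term $\lim_{s\to 0^+}u(s,z)$ makes no contribution, so that only the double integral survives.

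For step (a) I would work purely algebraically with commutators. First, $\Mtpzw$ commutes with $\partial_s$, and $\Htp$ itself satisfies $(\partial_s+\Boxtpz)\Htp=0$, so $g$ collapses to a telescoping sum
\[
g = -\sum_{j=0}^{n-1} \Mtp^{\,n-1-j}\,[\Boxtpz,\Mtpzw]\,\Mtp^{\,j}\Htp.
\]
The next step is to move every factor $[\Boxtpz,\Mtpzw]$ to the left across the remaining copies of $\Mtpzw$. Using \eqref{eqn:Box M commutator} together with \eqref{eqn:al2} (which gives $[\Mtpzw,\Zbstpz]=-e(w,z)$ and, by conjugation, $[\Mtpzw,\Zstpz]=e(w,z)$), one computes the double commutator $[\Mtpzw,[\Boxtpz,\Mtpzw]] = -2|e(w,z)|^{2}$; induction on $j$ then yields
\[
\Mtp^{\,j}[\Boxtpz,\Mtpzw] = [\Boxtpz,\Mtpzw]\Mtp^{\,j} - 2j|e(w,z)|^{2}\Mtp^{\,j-1}.
\]
Substituting this into the telescoping sum and using $\sum_{j=1}^{n-1}2j = n(n-1)$ gives
\[
g = n\,[\Boxtpz,\Mtpzw]\Mtp^{\,n-1}\Htp - n(n-1)|e(w,z)|^{2}\Mtp^{\,n-2}\Htp,
\]
which is exactly the integrand of the claimed formula (after renaming $z\mapsto \xi$ inside the Duhamel convolution).

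For step (b), Proposition \ref{prop:IVP} requires identifying the initial trace $f(z) = \lim_{s\to 0^+}(\Mtpzw)^n\Htp(s,z,w)$ and showing that $\int_\C \Htp(s,z,\xi)f(\xi)\,d\xi = 0$. Off the diagonal $\Htp(s,z,w)\to 0$ as $s\to 0^+$, and after differentiating $n$ times in $\tau$ (with the twist $-iT(w,z)$ each time) this vanishing persists in the sense of distributions; the precise statement needed is Theorem 6.3 of \cite{Rai10h}, which asserts that the single integral in the corresponding Duhamel decomposition is zero in this setting. The main obstacle is therefore not the algebra of part (a) — which is mechanical once the two commutator identities are in hand — but rather justifying the vanishing initial term and confirming the $L^2$/decay hypotheses of Proposition \ref{prop:IVP} hold for each iterate $\Mtp^{\,j}\Htp(\cdot,\cdot,w)$. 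The latter can be bootstrapped from the small-$n$ kernel bounds in \cite{Rai10h} cited in the remark following Theorem \ref{thm:Htp satisfies QSE}, ensuring that the Duhamel identity is applied in a regime where both sides are absolutely convergent.
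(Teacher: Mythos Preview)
Your proposal is correct and follows essentially the same approach as the paper: compute the forcing term $g$ via commutator algebra (using the double commutator $[\Mtpzw,[\Boxtpz,\Mtpzw]]=-2|e(w,z)|^{2}$ and the induction $\Mtp^{\,j}[\Boxtp,\Mtp]=[\Boxtp,\Mtp]\Mtp^{\,j}-2j|e|^{2}\Mtp^{\,j-1}$), then invoke Theorem~6.3 of \cite{Rai10h} to kill the initial-data term in the Duhamel formula. One minor slip: your telescoping sum should carry a plus sign, $g=+\sum_{j=0}^{n-1}\Mtp^{\,n-1-j}[\Boxtpz,\Mtpzw]\Mtp^{\,j}\Htp$, but your final expression for $g$ is correct regardless.
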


We use Proposition \ref{prop:n derivs, no recursion} as a starting point for a recursion to generate a formula for
$(\Mtp^{z,w})^n\Htp(s,z,w)$ that involves no $\tau$-derivatives of $\Htp(s,z,w)$. Plugging
the integral for $(\Mtp^{z,w})^{n-1}\Htp(s,z,w)$ and $(\Mtp^{z,w})^{n-2}\Htp(s,z,w)$ into the RHS of \eqref{eqn:n derivs, no recursion},
we have
\begin{align*}
&(\Mtp^{z,w})^n\Htp(s,z,w) \\
&= n(n-1)\int_0^s \int_0^{r_1} \int_{\C^2} \Htp(s-r_1,z,\xi_1) 
[\Box_{\tau p,\xi_1},\Mtp^{\xi_1,w}]\Htp(r_1-r_2,\xi_1,\xi_2) \\
&\hspace{2in}\times [\Box_{\tau p,\xi_2},\Mtp^{\xi_2,w}] 
(\Mtp^{\xi_2,w})^{n-2} \Htp(r_2,\xi_2,w)\, d\xi_2\,d\xi_1\, dr_2 \, dr_1 \\
&- n(n-1)(n-2) \int_0^s \int_0^{r_1} \int_{\C^2} \Htp(s-r_1,z,\xi_1) 
[\Box_{\tau p,\xi_1},\Mtp^{\xi_1,w}]\Htp(r_1-r_2,\xi_1,\xi_2)\\
&\hspace{2in}\times |e(w,\xi_2)|^2 (\Mtp^{\xi_2,w})^{n-3} \Htp(r_2,\xi_2,w)\,
d\xi_2\,d\xi_1\, dr_2 \, dr_1 \\
&- n(n-1)(n-2) \int_0^s \int_0^{r_1} \int_{\C^2} \Htp(s-r_1,z,\xi_1) 
|e(w,\xi_1)|^2 \Htp(r_1-r_2,\xi_1,\xi_2)\\
&\hspace{2in}\times [\Box_{\tau p,\xi_1},\Mtp^{\xi_2,w}] (\Mtp^{\xi_2,w})^{n-3} \Htp(r_2,\xi_2,w)\,
d\xi_2\,d\xi_1\, dr_2 \, dr_1 \\
&+ n(n-1)(n-2)(n-3) \int_0^s \int_0^{r_1} \int_{\C^2} \Htp(s-r_1,z,\xi_1) 
|e(w,\xi_1)|^2 \Htp(r_1-r_2,\xi_1,\xi_2)\\
&\hspace{2in}\times |e(w,\xi_2)|^2 (\Mtp^{\xi_2,w})^{n-4} \Htp(r_2,\xi_2,w)\,
d\xi_2\,d\xi_1\, dr_2 \, dr_1
\end{align*}
The procedure is repeated while there are still $\Mtp\Htp$ terms left in the integrals. To calculate the resulting
integral, a number of observations are needed. First, since the integral for $n$-derivatives decomposes to a sum involving
$(n-1)$-derivatives and $(n-2)$-derivatives, if $f_n$ is the number of integrals that $n$-derivatives decomposes into, then
we have the relation
\[
f_n = f_{n-1}+f_{n-2}.
\]
Also, we know that $f_1=1$ and by Proposition \ref{prop:n derivs, no recursion}, $f_2 =2$. 
Thus, $f_n$ is the $n$th Fibonacci number and
\[
f_n = \frac{1}{\sqrt 5}\bigg( \Big(\frac{1+\sqrt 5}2\Big)^{n+1} - \Big(\frac{1-\sqrt 5}2\Big)^{n+1}\bigg). 
\]
The important feature of $f_n$ is that it grows geometrically with $n$ (and not faster!). It is easiest to
describe the derivation for the formula for $(\Mtp^{z,w})^n\Htp(s,z,w)$ in the language of trees. 
The descendants of $(\Mtp^{z,w})^n\Htp(s,z,w)$ are an integral that involves $(\Mtp^{z,w})^{n-1}\Htp(s,z,w)$
and an integral that involves $(\Mtp^{z,w})^{n-2}\Htp(s,z,w)$. The child that inherits the term with
$(\Mtp^{z,w})^{n-1}\Htp(s,z,w)$  comes with a factor $n$ and the commutator $[\Boxtp,\Mtp]$. The child
with $(\Mtp^{z,w})^{n-2}\Htp(s,z,w)$ inherits a factor of $-n(n-1)$ and an $|e(w,\xi)|^2$-term. We know that there
are $f_n$ paths down the tree. Let the left child denote the term where $\Mtp$ drops by one degree
and the right child denote the term where $\Mtp$ drops by two degrees.
Let $\I_n$ denote the set of paths down tree for $(\Mtp^{z,w})^{n}\Htp(s,z,w)$. 
A path $J\in \I_n$ is a sequence $\{a_j\}$ with $a_j=1$ indicating a ``left" child and $a_j=2$ indicating a ``right" child.
The path length is $|J|$. It follows that $n/2 \leq |J| \leq n$. Let $J_1 = \#\{j\in J : a_j=1\}$ and
$J_2 = \#\{j\in J: a_j=2\}$. 
Let
\[
N(a_j,\xi_j) = \begin{cases} [\Box_{\tau p,\xi_j},\Mtp^{\xi_j,w}] & a_j=1 \\ |e(w,\xi_j)|^2 & a_j=2\end{cases}.
\]
The operator $N(a_j,\xi_j)$ records the information discussed above. It follows that 
% Proposition : n derivatives, recursion
\begin{prop}\label{prop:n derivs, recursion}
\begin{align*}
&(\Mtp^{z,w})^n\Htp(s,z,w)\\
&= n! \sum_{J\in\I_n} (-1)^{J_2} \int_0^s \int_0^{r_1} \cdots \int_0^{r_{|J|-1}} \int_{\C^{|J|}}
\Htp(s-r_1,z,\xi_1) \Big( \prod_{j=1}^{|J|-1} N(a_j,\xi_j) \Htp(r_j-r_{j+1},\xi_j,\xi_{j+1})\Big) \\
&\times N(a_{|J|},\xi_{|J|}) \Htp(r_{|J|},\xi_{|J|},w)\, d\xi_{|J|}\cdots d\xi_1\, dr_{|J|}\cdots dr_1.
\end{align*}
\end{prop}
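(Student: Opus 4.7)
The plan is to prove the formula by strong induction on $n$, using Proposition~\ref{prop:n derivs, no recursion} itself as both the base case and the unfolding step at every level. For $n=1$ one has $\I_1=\{(1)\}$ with $|J|=1$ and $J_2=0$, so the formula collapses to the first integral in Proposition~\ref{prop:n derivs, no recursion}; for $n=2$ the set $\I_2=\{(1,1),(2)\}$ produces exactly the two terms of that proposition (with $(\Mtp^{\xi,w})^0\Htp=\Htp$ understood in the right-hand summand). These serve as the base cases.

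For the inductive step I would start from Proposition~\ref{prop:n derivs, no recursion} and substitute the inductive formula into each of the factors $(\Mtp^{\xi_1,w})^{n-1}\Htp(r_1,\xi_1,w)$ and $(\Mtp^{\xi_1,w})^{n-2}\Htp(r_1,\xi_1,w)$. Relabeling the inner time variables $r'_j\mapsto r_{j+1}$ and the inner space variables $\xi'_j\mapsto\xi_{j+1}$ produces the nested iterated integrals appearing in the proposition. The operator $N(1,\xi_1)=[\Box_{\tau p,\xi_1},\Mtp^{\xi_1,w}]$ is a first-order differential operator in $\xi_1$, which enters the inductive integrand only through the first heat kernel $\Htp(r_1-r_2,\xi_1,\xi_2)$, so its action can be placed next to that kernel exactly as in the target formula; $N(2,\xi_1)=|e(w,\xi_1)|^2$ is a pure multiplication, which does the same automatically.

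The combinatorial heart of the argument is the natural bijection
\[
\I_n \;\longleftrightarrow\; \bigl(\{1\}\times\I_{n-1}\bigr)\sqcup\bigl(\{2\}\times\I_{n-2}\bigr)
\]
that prepends the first step to any shorter path. Under this bijection a path $J=(1,J')\in\I_n$ satisfies $J_2=J'_2$, and the external factor $n$ combines with the inductive $(n-1)!$ to give $n!$; a path $J=(2,J')\in\I_n$ satisfies $J_2=1+J'_2$, and the external $-n(n-1)$ combines with $(n-2)!$ to give $-n!$, with the extra sign absorbed into $(-1)^{J_2}=-(-1)^{J'_2}$. In both cases the summand carries the correct $(-1)^{J_2}n!$, closing the induction.

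The main obstacle I anticipate is not analytic but purely notational: one must carefully track the ordered chain $0\leq r_{|J|}\leq\cdots\leq r_1\leq s$ when reindexing the inner integrals coming from the inductive hypothesis, and verify that each operator $N(a_j,\xi_j)$ is attached to the heat kernel $\Htp(r_j-r_{j+1},\xi_j,\xi_{j+1})$ rather than to an adjacent factor. No new analytic input is required beyond Propositions~\ref{prop:IVP} and~\ref{prop:n derivs, no recursion} together with the commutator identity~\eqref{eqn:Box M commutator}.
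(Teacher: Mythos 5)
Your proof is correct and takes essentially the same approach as the paper: both iterate Proposition~\ref{prop:n derivs, no recursion}, organizing the resulting terms as paths in the binary tree whose left/right children drop the $\Mtp$-degree by one/two. The paper presents this informally by writing out one level of substitution and describing the tree, whereas you formalize the same idea as a strong induction with the bijection $\I_n\leftrightarrow(\{1\}\times\I_{n-1})\sqcup(\{2\}\times\I_{n-2})$ making the $n!\,(-1)^{J_2}$ bookkeeping airtight.
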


%%%%%%%%%%%%%%%%%%%%%%
%
%	SECTION: PROOF OF MtpHtp estimates
%
%%%%%%%%%%%%%%%%%%%%%%
\section{Proof of Theorem \ref{thm:Htp satisfies QSE}}
\label{sec: proof of QSE thm}
Understanding how to manipulate the formula in Proposition \ref{prop:n derivs, recursion} is the crux of the proof. The three parts of
Theorem \ref{thm:Htp satisfies QSE} are proven similarly, though not identically. We will start with \emph{(i)} and prove it in detail. We will discuss
the modifications necessary for \emph{(ii)} and \emph{(iii)}. The workhorse estimates for proving Theorem \ref{thm:Htp satisfies QSE} are the following
estimates from \cite{Rai06h}.
When $\tau>0$, 
\begin{equation}\label{eqn:estimate for Htp}
|\Htp(s,z,w)| \leq \frac{C}s e^{-c_0 \frac{|z-w|^2}s} e^{-c_0 \frac{s}{\mu(z,1/\tau)^2}}e^{-c_0 \frac{s}{\mu(w,1/\tau)^2}} .
\end{equation}
and
\begin{equation}
|\Zbstpz\Htp(s,z,w)| + |\Zstp\Htp(s,z,w)|
\leq \frac{C}{s^{3/2}} e^{-c_0 \frac{|z-w|^2}s} e^{-c_0 \frac{s}{\mu(z,1/\tau)^2}}e^{-c_0 \frac{s}{\mu(w,1/\tau)^2}}.
\label{eqn:Zbstp Htp estimates}
\end{equation}
% Remark: tau<0 case 
\begin{rem}
When $\tau <0$, 
$\Htp(s,z,w)$ satisfies a weaker estimate (proven in \cite{Rai07}).
Fortunately, we avoid this difficulty here by
we exploiting the equality $\Box_{-\tau p} = \overline{\Boxwtp}$ and the fact that we can write
certain derivatives of $\Hwtp(s,z,w)$ in terms of $\Htp(s,z,w)$ as done in (\ref{eqn: M^n H tau<0 with M^n H tau>0}).
\end{rem}

Since there are only $f_n$-terms in the calculation and $f_n$ grows geometrically with $n$, we can treat each
integral from Proposition \ref{prop:n derivs, recursion} separately. The integrals can all be handled analogously, and we choose
to show a specific one for expositional clarity. We will show the case when $a_j=1$ for all $j$. Even more specifically,
$[\Boxtp,\Mtp]$, as given in the second line 
of (\ref{eqn:Box M commutator}), contains three terms.  We concentrate on the term that always has $e(w,\xi)\Zstpxi$. 
Without loss of generality, we can take $w=0$ since the argument is the same regardless of the $w$ we choose. 
The integral we estimate is
\begin{multline*}
I := \Bigg| \int_0^s \int_0^{r_1} \cdots \int_0^{r_{n-1}} \int_{\C^{n}} \Htp(s-r_1,z,\xi_1)
\Big( \prod_{j=1}^{n-1} e(0,\xi_j) Z_{\tau p,\xi_j} \Htp(r_j-r_{j+1},\xi_j,\xi_{j+1})\Big) \\
e(0,\xi_n) Z_{\tau p,\xi_n} \Htp(r_n,\xi_n,0)\, d\xi_n\cdots d\xi_1\, dr_n\cdots  dr_1\Bigg|.
\end{multline*}

The following inequality follows from the concavity of the logarithm and the convexity of $x^k$.
% Lemma
\begin{lem}\label{lem:sum to a power}
Let $ k$ be a positive integer and $a_1,\dots,a_{k}>0$. Then
\[
(a_1 \cdots a_k) \leq \frac{1}{k}(a_1^k + \cdots + a_k^k).
\] 
\end{lem}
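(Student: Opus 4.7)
The plan is to recognize this as the AM--GM inequality in disguise. Applying AM--GM to the $k$ positive quantities $a_1^k, a_2^k, \dots, a_k^k$ yields
\[
\frac{1}{k}\bigl(a_1^k + a_2^k + \cdots + a_k^k\bigr) \geq \bigl(a_1^k a_2^k \cdots a_k^k\bigr)^{1/k} = a_1 a_2 \cdots a_k,
\]
which is exactly the stated inequality.

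To justify the AM--GM step from the hint (concavity of $\log$, convexity of $x^k$), I would invoke Jensen's inequality for the concave function $\log$: for any positive $b_1, \dots, b_k$,
\[
\log\Bigl(\tfrac{1}{k}\textstyle\sum_{i=1}^k b_i\Bigr) \;\geq\; \tfrac{1}{k}\sum_{i=1}^k \log b_i \;=\; \log\Bigl(\prod_{i=1}^k b_i\Bigr)^{1/k},
\]
and exponentiating both sides gives the AM--GM inequality for the $b_i$. Setting $b_i = a_i^k$ then produces the displayed bound. Alternatively, one may quote convexity of $x \mapsto x^k$ and apply Jensen in the form $\bigl(\frac{1}{k}\sum a_i\bigr)^k \leq \frac{1}{k}\sum a_i^k$, which after rearrangement by AM--GM for the $a_i$ themselves gives the same conclusion.

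There is no real obstacle to overcome here; the lemma is a classical consequence of Jensen's inequality, and the only point to remember is the correct choice of variables ($b_i = a_i^k$) so that the geometric mean on the right collapses to the product $a_1 \cdots a_k$ rather than $(a_1 \cdots a_k)^{1/k}$. Equality holds precisely when $a_1 = a_2 = \cdots = a_k$, though this is not needed for the later application.
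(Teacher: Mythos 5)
Your proof is correct, and it follows exactly the route the paper suggests: the paper gives no written proof for Lemma~\ref{lem:sum to a power}, only the remark that it ``follows from the concavity of the logarithm and the convexity of $x^k$,'' which is precisely the Jensen/AM--GM argument you carry out with $b_i = a_i^k$. Nothing further is needed.
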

The inequality is seen to be sharp by considering $a_1= \cdots = a_k =a$.
The other extremely useful fact is that
\begin{multline}\label{eqn:gauss convolution}
\exp\Big(-c_0 \frac{|\xi_{k-1}-\xi_k|^2}{r_{k-1}-r_k}\Big)\exp\Big(-c_0 \frac{|\xi_{k}|^2}{r_{k}}\Big) \\
= \exp\Big(-c_0 \frac{r_{k-1}}{(r_{k-1}-r_k)r_{k}}
 \Big| \xi_k - \frac{r_{k}}  {r_{k-1}} \xi_{k-1}\Big|^2\Big) \exp\Big(-c_0 \frac{|\xi_{k-1}|^2}{r_{k-1}}\Big). 
\end{multline}

We now start the proof of the estimates of Theorem 
\ref{thm:Htp satisfies QSE}. First, we will handle the estimates without
the term $1/\tau^2$ on the right; these will be referred to as the estimates without $\tau$-decay.
Then the argument will then be modified to establish the estimates with  $1/\tau^2$ on the right,
and these will be referred to as the estimates with $\tau$-decay. 

%subsection: First estimate if $I$
\subsection{Estimate \emph{(i)} of $I$ without $\tau$-decay.}
By Lemma \ref{lem:sum to a power}, we have
\[
|e(0,\xi_1)\cdots e(0,\xi_n)| \leq \frac 1n\big( |e(0,\xi_1)|^n + \cdots + |e(0,\xi_n)|^n\big).
\]
We let $C = C(p)$ (or $A$) be a constant that may vary from line to line and may depend on
$\deg(\triangle p)+2$ but NOT on $n$, the coefficients of $p$, or $s$. By (\ref{eqn:estimate for Htp})
and (\ref{eqn:Zbstp Htp estimates}), we have
\begin{multline*}
I \leq \frac{A^n}n \int_0^s\cdots\int_0^{r_{n-1}}\int_{\C^n}
\frac{e^{-c_0 \frac{|z-\xi_1|^2}{s-r_1}}}{s-r_1}
\Big(\frac{\prod_{m=1}^{n-1} e^{-c_0 \frac{|\xi_m-\xi_{m+1}|^2}{r_m-r_{m+1}}} }
{(r_m-r_{m+1})^{3/2}}\Big) 
\frac{e^{-c_0}\frac{|\xi_n|^2}{r_n}}{r_n^{3/2}}\\
\big( |e(0,\xi_1)|^n + \cdots + |e(0,\xi_n)|^n\big)\, d\xi_n\cdots d\xi_1\, dr_n\cdots dr_1. 
\end{multline*}
Note that we have ignored the terms on the right in (\ref{eqn:estimate for Htp})
and (\ref{eqn:Zbstp Htp estimates}) that involve decay in $\tau$ for this part of the argument.
Choosing an arbitrary $e(0,\xi_\ell)$ term, we estimate  the space integral first. Also, set $r_0=s$.
\begin{align*}
&\int_{\C^n}e^{-c_0 \frac{|z-\xi_1|^2}{s-r_1}}  
\Big[\prod_{m=1}^{n-1} e^{-c_0 \frac{|\xi_m-\xi_{m+1}|^2}{r_m-r_{m+1}}}\Big]e^{-c_0\frac{|\xi_n|^2}{r_n}}
|e(0,\xi_\ell)|^n \, d\xi_n\cdots d\xi_1 \\
&\leq C \int_{\C^n}e^{-c_0 \frac{|z-\xi_1|^2}{s-r_1}}  
\Big[\prod_{m=1}^{n-1} e^{-c_0 \frac{|\xi_m-\xi_{m+1}|^2}{r_m-r_{m+1}}}\Big]e^{-c_0\frac{|\xi_n|^2}{r_n}}
\big(\sumjko |\ajkoo| |\xi_\ell |^{j+k} \big)^n \, d\xi_n\cdots d\xi_1 \\
&\leq A^n\sumjko |\ajkoo|^n \int_{\C^n}e^{-c_0 \frac{|z-\xi_1|^2}{s-r_1}}  
\Big[\prod_{m=1}^{n-1} e^{-c_0 \frac{|\xi_m-\xi_{m+1}|^2}{r_m-r_{m+1}}}\Big]e^{-c_0\frac{|\xi_n|^2}{r_n}}
|\xi_\ell |^{n(j+k)}  \, d\xi_n\cdots d\xi_1 \\
&\leq A^n e^{-\frac{c_0}2 \frac{|z|^2}{s}} \sumjko |\ajkoo|^n \int_{\C^n}  
\Big[\prod_{m=1}^{\ell} e^{-\frac{c_0}2 \frac{r_{m-1}}{(r_{m-1}-r_m)r_m} 
|\xi_m- \frac {r_m}{r_{m-1}} \xi_{m-1}|^2} \Big]  \\
&\hspace{.5in}
\Big(\prod_{\alpha=\ell+1}^n e^{-c_0 \frac{r_{\alpha-1}}{(r_{\alpha-1}-r_\alpha)r_\alpha}
|\xi_\alpha- \frac {r_\alpha}{r_{\alpha-1}} \xi_{\alpha-1}|^2} \Big)
e^{-\frac{c_0}2\frac{|\xi_\ell|^2}{r_\ell}} |\xi_\ell |^{n(j+k)} \, d\xi_n\cdots d\xi_1,
\end{align*} 
where the last inequality uses (\ref{eqn:gauss convolution}) repeatedly. By \eqref{eqn:polynomial times exponential decay}, 
\[
e^{-\frac{c_0}2\frac{|\xi_\ell|^2}{r_\ell}} |\xi_\ell |^{n(j+k)}
\leq \Big( \frac{(j+k) r_\ell}{e c_0}\Big)^{n\fjkt} n^{n\fjkt}
\leq A^n s^{n\fjkt} n^{n\fjkt}
\]
for all $1 \leq j+k \leq \textrm{deg} (p)$.
Consequently,
\begin{align*}
&\int_{\C^n}e^{-c_0 \frac{|z-\xi_1|^2}{s-r_1}}  
\Big[\prod_{m=1}^{n-1} e^{-c_0 \frac{|\xi_m-\xi_{m+1}|^2}{r_m-r_{m+1}}}\Big]e^{-c_0\frac{|\xi_n|^2}{r_n}}
|e(0,\xi_\ell)|^n \, d\xi_n\cdots d\xi_1 \\
&\leq A^n e^{-\frac{c_0}2 \frac{|z|^2}{s}} \sumjko\big( |\ajkoo|^n s^{n\fjkt} n^{n\fjkt} \big)
\int_{\C^n}  
\Big[\prod_{m=1}^{\ell} e^{-\frac{c_0}2 \frac{r_{m-1}}{(r_{m-1}-r_m)r_m} 
|\xi_m- \frac {r_m}{r_{m-1}} \xi_{m-1}|^2} \Big]  \\
&\hspace{.5in}
\Big(\prod_{\alpha=\ell+1}^n e^{-c_0 \frac{r_{\alpha-1}}{(r_{\alpha-1}-r_\alpha)r_\alpha}
|\xi_\alpha- \frac {r_\alpha}{r_{\alpha-1}} \xi_{\alpha-1}|^2} \Big) 
\, d\xi_n\cdots d\xi_1\\
&= A^n e^{-\frac{c_0}2 \frac{|z|^2}{s}} \sumjko\big( |\ajkoo|^n s^{n\fjkt} n^{n\fjkt} \big)
\frac{(s-r_1)r_1}s \Big(\prod_{m=1}^{n-1} \frac{ (r_m-r_{m+1})r_{m+1}}{r_m}\Big).
\end{align*} 
Plugging this space integral estimate into the estimate for $I$, we have
\begin{multline*}
I \leq \frac{A^n}{sn} e^{-\frac{c_0}2 \frac{|z|^2}{s}} \sumjko\big( |\ajkoo|^n s^{n\fjkt} n^{n\fjkt} \big) \\
\times 
\int_0^{s}\int_{0}^{r_1}\cdots\int_0^{r_{n-1}}
(r_1-r_2)^{-1/2}\cdots (r_{n-1}-r_n)^{-1/2} r_n^{-1/2}\, dr_n\cdots dr_1.
\end{multline*}
To estimate the convolutions in the time (i.e., $r$-integrals), we use the $\beta$-function result
\begin{equation}\label{eqn:convolution of s and (r-s)}
\int_0^r \frac{s^{m/2-1}}{(r-s)^{1/2}}\, ds
= r^{\frac{m+1}2 -1} \int_0^1 s^{\frac{m}2 -1} (1-s)^{\frac12-1} \, ds
=  r^{\frac{m+1}2 -1} \sqrt\pi \frac{\Gamma(\frac m2)} {\Gamma(\frac{m+1}2)}.
\end{equation}
Thus,
\begin{align*}
\int_0^s \int_{0}^{r_1}\cdots&\int_0^{r_{n-1}}
(r_1-r_2)^{1/2-1}\cdots (r_{n-1}-r_n)^{1/2-1} r_n^{1/2-1}\, dr_n\cdots dr_1\\
&= \sqrt\pi \frac{\Gamma(\frac 12)}{\Gamma(\frac 22)}
\int_0^s \int_{0}^{r_1}\cdots\int_0^{r_{n-2}}
(r_1-r_2)^{1/2-1}\cdots (r_{n-2}-r_{n-1})^{1/2-1}r_{n-1}^{1-1} \, dr_{n-1}\cdots dr_1 \\
&= \pi^{2/2}  \frac{\Gamma(\frac 12)}{\Gamma(\frac 22)} \frac{\Gamma(\frac 22)}{\Gamma(\frac 32)}
\int_0^s \int_{0}^{r_1}\cdots\int_0^{r_{n-3}}
(r_1-r_2)^{1/2-1}\cdots (r_{n-3}-r_{n-2})^{1/2-1}r_{n-2}^{\frac 32-1} \, dr_{n-2}\cdots dr_1 \\
&= \cdots = \pi^{\frac{n-1}2} \frac{\Gamma(1/2)}{\Gamma(n/2)}
\int_0^s r_1^{\frac n2 -1}\, dr_1 
= \frac{ \pi^{n/2}}{\frac n2 \Gamma(\frac n2)} s^{n/2}.
\end{align*}
Combining our estimates together, we have
\[
I \leq \frac{A^n}{s n^2 \Gamma(n/2)}
e^{-\frac{c_0}2 \frac{|z|^2}{s}} \sumjko\big( |\ajkoo|^n s^{n\frac{j+(k+1)}2} n^{n\fjkt} \big).
\]
It turns out that the $n\Gamma(n/2)$ term is exactly what we need to attain
(\ref{eqn:estimate to show to prove theorem}). In the statement of Proposition \ref{prop:n derivs, recursion}, there
is an $n!$ multiplying the integral. By Stirling's formula, we can bound
\[
\frac{n!}{n^2 \Gamma(n/2)} \leq A^n \frac{n^n}{n^{n/2}} = A^n n^{n/2}.
\]
Therefore,
\[
n^{n\fjkt} \frac{n!}{n^2 \Gamma(n/2)} \leq A^n n^{n\frac{j+(k+1)}2}
\]
Reindexing our sum and interchanging $z$ and $w$, we have shown that for $\tau>0$,
\[
|(\Mtp^{z,w})^n\Htp(s,z,w)|
\leq \frac{A^n}s e^{-c_0\frac{|z|^2}{2s}} \sjk |\ajkz|^n s^{n\fjkt} n^{n\fjkt}
\]
which is the desired estimate in (\emph{i}) without decay.

%subsection: Second estimation of \Mtp^n \Htp
\subsection{Second estimation of $I$ with decay.}
\label{subsec:al4}
This time we will exploit the $\tau$ decay terms 
in (\ref{eqn:estimate for Htp}) and (\ref{eqn:Zbstp Htp estimates}),
(including those depending on $\mu (\xi, 1/\tau)$). 
We also apply Lemma \ref{lem:sum to a power} to $\prod_{j=1}^{n-2} |e(0, \xi_j)|$
leaving $|e(0,\xi_{n-1})||e(0, \xi_n)|$ alone. We obtain
\begin{multline*}
I \leq \frac{A^{n-2}}{(n-2)} \int_0^s\cdots\int_0^{r_{n-1}}\int_{\C^n}
\frac{e^{-c_0 \frac{|z-\xi_1|^2}{s-r_1}}}{s-r_1}
\Big( \prod_{j=1}^{n-1} \frac{e^{-c_0 \frac{|\xi_j-\xi_{j+1}|^2} {r_j-r_{j+1}}}}
{(r_j-r_{j+1})^{3/2}}\Big) 
\frac{e^{-c_0}\frac{|\xi_n|^2}{r_n}}{r_n^{3/2}}\\
\times \big( |e(0,\xi_1)|^{n-2} + \cdots + |e(0,\xi_{n-2})|^{n-2}\big)|e(0,\xi_{n-1})||e(0,\xi_n)| \\
\times e^{-c_0 \frac{r_{n-2}-r_n}{\mu(\xi_{n-1},1/\tau)^2}}e^{-c_0 \frac{r_{n-1}}{\mu(\xi_{n},1/\tau)^2}}
e^{-c_0 \frac{r_n}{\mu(0,1/\tau)^2}}
\, d\xi_n\cdots d\xi_1\, dr_n\cdots dr_1. 
\end{multline*}
In the above calculation, we used 
$\exp(-c_0 \frac{r_{n-2}-r_{n-1}}{\mu(\xi_{n-1},1/\tau)})\exp(-c_0 \frac{r_{n-1}-r_n}{\mu(\xi_{n-1},1/\tau)})
= \exp(-c_0 \frac{r_{n-2}-r_n}{\mu(\xi_{n-1},1/\tau)})$,
which explains the appearance of this term in the above integrand.

We pick just one $|e(0,\xi_\ell)|^{n-2}$ term and concentrate on the space integral. 
Using (\ref{eqn:polynomial times exponential decay}), we have
\[
e^{-c_0 \frac{|\xi_\ell|^2}{r_\ell}} |e(0,\xi_\ell)|^{n-2}
\leq \sumjko |\ajkoo|^{n-2} s^{(n-2)\fjkt}(n-2)^{(n-2)\fjkt}
\]
(since $r_\ell \leq s$).
By a repeated use of (\ref{eqn:gauss convolution}) as we did in 
our first estimate of $I$, we have
\begin{align*}
&II := \int_{\C^n}
e^{-c_0 \frac{|z-\xi_1|^2}{s-r_1}}
\Big( \prod_{j=1}^{n-1} e^{-c_0 \frac{|\xi_j-\xi_{j+1}|^2} {r_j-r_{j+1}}} \Big) |e(0,\xi_\ell)|^{n-2}
e^{-c_0\frac{|\xi_n|^2}{r_n}}
e^{-c_0 \frac{r_{n-2}-r_n}{\mu(\xi_{n-1},1/\tau)^2}} \\
& \hspace{2.5in} \times|e(0,\xi_{n-1})||e(0,\xi_n)|   e^{-c_0 \frac{r_{n-1}}{\mu(\xi_{n},1/\tau)^2}}
e^{-c_0 \frac{r_n}{\mu(0,1/\tau)^2}}
\, d\xi_n\cdots d\xi_1 \\
& \leq A^n e^{-\frac{c_0}2 \frac{|z|^2}s} \sumjko |\ajkoo|^{n-2} s^{(n-2)\fjkt}(n-2)^{(n-2)\fjkt}
\int_{\C^n} \Big[ \prod_{m=1}^n e^{-\frac{c_0}2 \frac{r_{m-1}}{(r_{m-1}-r_m)r_m} |\xi_m - \frac{r_m}{r_{m-1}} \xi_{m-1}|^2}\Big] \\
&\times \Big( |e(0,\xi_n)| e^{-\frac{c_0}4 \frac{|\xi_n|^2}{r_n}} e^{-c_0 \frac{r_{n-1}}{\mu(\xi_n,1/\tau)^2}}\Big)
\Big( |e(0,\xi_{n-1})| e^{-\frac{c_0}4 \frac{|\xi_{n-1}|^2}{r_{n-1}}} e^{-c_0 \frac{r_n}{\mu(\xi_{0},1/\tau)^2}}
e^{-c_0 \frac{r_{n-2}-r_n} {\mu(\xi_{n-1},1/\tau)^2}} \Big)\, d\xi_n\cdots d\xi_1.
\end{align*}

Again using (\ref{eqn:polynomial times exponential decay}), we have
\begin{multline*}
|e(0,\xi_n)| e^{-\frac{c_0}4 \frac{|\xi_n|^2}{r_n}} e^{-c_0 \frac{r_{n-1}}{\mu(\xi_n,1/\tau)^2}}
\leq C \sum_{j\geq 1} |\A{j1}{\xi_n}| |\xi_n|^j \frac{r_n^{j/2}}{|\xi_n|^j} \frac{r_{n-1}^{1/2}}{r_{n-1}^{1/2}}
\frac{\mu(\xi_n,1/\tau)^{j+1}}{r_{n-1}^{(j+1)/2}}\\ 
= \frac{C}{r_{n-1}^{1/2}} \Lambda(\xi_n,\mu(\xi_n,1/\tau)) \leq
\frac C{\tau r_{n-1}^{1/2}}.
\end{multline*}
Since, $\max\{r_n, r_{n-2}-r_n\}\geq \frac 12 r_{n-2}$, a similar argument shows that
\[
|e(0,\xi_{n-1})| e^{-\frac{c_0}4 \frac{|\xi_{n-1}|^2}{r_{n-1}}} e^{-c_0 \frac{r_n}{\mu(0,1/\tau)^2}}
e^{-c_0 \frac{r_{n-2}-r_n} {\mu(\xi_{n-1},1/\tau)^2}}
\leq \frac C{\tau r_{n-2}^{1/2}}.
\]
Consequently,
\[
II \leq \frac{A^n e^{-c_0 \frac{|z|^2}{2s}}}{\tau^2} \sumjko |\ajkoo|^{n-2} s^{(n-2)\fjkt}(n-2)^{(n-2)\fjkt}
\Big( \prod_{m=1}^n \frac{(r_{m-1}-r_m)r_m}{r_{m-1}}\Big) r_{n-2}^{-1/2} r_{n-1}^{-1/2}.
\]
The time $r$-integrals become
\begin{multline*}
\frac 1s \int_0^s \cdots \int_0^{r_{n-1}} (r_1-r_2)^{\frac 12 -1}\cdots (r_{n-1}-r_n)^{\frac 12 -1}
r_{n-2}^{\frac 12 -1} r_{n-1}^{\frac 12 -1} r_n^{\frac 12 -1}\, dr_n\cdots dr_1 \\
 = \frac {\pi^2}s \int_0^s \cdots \int_0^{r_{n-3}} (r_1-r_2)^{\frac 12 -1}\cdots (r_{n-3}-r_{n-2})^{\frac 12 -1}
r_{n-2}^{\frac 12 -1}\, dr_{n-2}\cdots dr_1 \\
= \frac{\pi^{2+\frac{n-2}2}}s \frac{1}{\frac{n-2}2 \Gamma(\frac{n-2}2)} s^{\frac{n-2}2}. 
\end{multline*}
Thus using similar arguments to those at the end of the first estimate, we obtain
\[
|(\Mtp^{z,w})^n\Htp(s,z,w)| \leq  \frac{A^n}{s\tau^2} e^{-c_0 \frac{|z|^2}{2s}} \sjk |\ajkz|^{n-2} s^{(n-2)\fjkt} (n-2)^{(n-2)\fjkt} 
\]
which establishes (\emph{i}) with decay.

% subsection: proof of (iii)
\subsection{Proof of Theorem \ref{thm:Htp satisfies QSE}, \emph{(iii)} with no decay in $\tau$.}
\label{subsec:two derivatives, no decay in tau}
Let $r_{|J|+1}=0$ and $\xi_{|J|+1} = w =0$.
The starting point for \emph{(iii)} is Proposition \ref{prop:n derivs, recursion}. We consider the case when $X^2 = \Wbstpw\Zstpz$
and outline the differences needed for other second derivative combinations later.
We have
\begin{align}
&\Wbstpw\Zstpz(\Mtp^{z,w})^n\Htp(s,z,0) \nn \\
&= n! \sum_{J\in\I_n} (-1)^{J_2} \int_0^s \int_0^{r_1} \cdots \int_0^{r_{|J|-1}} \int_{\C^{|J|}}
\Zstpz\Htp(s-r_1,z,\xi_1) \Big( \prod_{j=1}^{|J|-1} N(a_j,\xi_j) \Htp(r_j-r_{j+1},\xi_j,\xi_{j+1})\Big) \label{eqn:main term in WZMH} \\
&\hspace{2in}\times N(a_{|J|},\xi_{|J|}) \Wbstpw\Htp(r_{|J|},\xi_{|J|},0)\, d\xi_{|J|}\cdots d\xi_1\, dr_{|J|}\cdots dr_1\nn \\
&\ + n!\sum_{k=1}^{|J|} \sum_{J\in\I_n} (-1)^{J_2} \int_0^s \int_0^{r_1} \cdots \int_0^{r_{|J|-1}} \int_{\C^{|J|}} \Zstpz\Htp(s-r_1,z,\xi_1) 
\label{eqn:error term in WZMH}  \\
&\ \times  \Big( \prod_{\atopp{j=1}{j\neq k}}^{|J|} N(a_j,\xi_j) \Htp(r_j-r_{j+1},\xi_j,\xi_{j+1})\Big)
\Big(\frac{\p}{\p \w}N(a_k,\xi_k)\Big) \Htp(r_k-r_{k+1},\xi_k,\xi_{k+1}) 
\, d\xi_{|J|}\cdots d\xi_1\, dr_{|J|}\cdots dr_1\nn
\end{align}
The first integral is the most difficult to bound. We concentrate on that integral and mention at the end how to deal with integrals in the second sum.

The issue  is the convergence of the time integrals. Each spacial derivative of  $\Htp$ increases the power $s$ (or $(r_j-r_{j+1})$) 
in the denominator by $1/2$, so we have to be careful in our estimation. The trick here is to use the $e(0,\xi_n)$ term. As above, we demonstrate
the estimation on
\begin{multline*}
III := \Bigg| \int_0^s \int_0^{r_1} \cdots \int_0^{r_{n-1}} \int_{\C^{n}}\Zstpz \Htp(s-r_1,z,\xi_1)
\Big( \prod_{j=1}^{n-1} e(0,\xi_j) Z_{\tau p,\xi_j} \Htp(r_j-r_{j+1},\xi_j,\xi_{j+1})\Big) \\
e(0,\xi_n) Z_{\tau p,\xi_n} \Wbstpw\Htp(r_n,\xi_n,0)\, d\xi_n\cdots d\xi_1\, dr_n\cdots  dr_1\Bigg|.
\end{multline*}
Using (\ref{eqn:Zbstp Htp estimates}) and Lemma \ref{lem:sum to a power}, we have
\begin{multline*}
III \leq \frac{A^n}{n-1} \int_0^s\cdots\int_0^{r_{n-1}}\int_{\C^n}
\frac{e^{-c_0 \frac{|z-\xi_1|^2}{s-r_1}}}{(s-r_1)^{3/2}}
\Big(\frac{\prod_{m=1}^{n-1} e^{-c_0 \frac{|\xi_m-\xi_{m+1}|^2}{r_m-r_{m+1}}} }
{(r_m-r_{m+1})^{3/2}}\Big) 
\frac{e^{-c_0}\frac{|\xi_n|^2}{r_n}}{r_n^2}\\
|e(0,\xi_n)|\big( |e(0,\xi_1)|^{n-1} + \cdots + |e(0,\xi_{n-1})|^{n-1}\big)\, d\xi_n\cdots d\xi_1\, dr_n\cdots dr_1. 
\end{multline*}
As above, we concentrate on the space integral first. We estimate
\begin{align*}
\int_{\C^n}&e^{-c_0 \frac{|z-\xi_1|^2}{s-r_1}}  
\Big[\prod_{m=1}^{n-1} e^{-c_0 \frac{|\xi_m-\xi_{m+1}|^2}{r_m-r_{m+1}}}\Big]e^{-c_0\frac{|\xi_n|^2}{r_n}}
|e(0,\xi_n)| |e(0,\xi_\ell)|^{n-1} \, d\xi_n\cdots d\xi_1 \\
\leq C& \int_{\C^n}e^{-c_0 \frac{|z-\xi_1|^2}{s-r_1}}  
\Big[\prod_{m=1}^{n-1} e^{-c_0 \frac{|\xi_m-\xi_{m+1}|^2}{r_m-r_{m+1}}}\Big]e^{-c_0\frac{|\xi_n|^2}{r_n}}
\big(\sumjko |\ajkoo| |\xi_\ell |^{j+k} \big)^{n-1} |e(0,\xi_n)| \, d\xi_n\cdots d\xi_1 \\
\leq C^n& \int_{\C^n} e^{-\frac{c_0}4\frac{|z|^2}s}
\bigg( \prod_{m=1}^n e^{-\frac{c_0}4 \frac{r_{m-1}}{(r_{m-1}-r_m)r_m} |\xi_m - \frac{r_m}{r_{m-1}}\xi_{m-1}|^2} \bigg)\\
&\times e^{-\frac{c_0}4 \frac{|\xi_n|^2}{r_n}} \bigg( \sjk |\ajko| |\xi_n|^{j+k-1}\bigg)
\sjk \big( |\ajko| |\xi_\ell|^{j+k-1}\big)^{n-1} e^{-\frac{c_0}4 \frac{|\xi_\ell|^2}{r_\ell}}\, d\xi_n\cdots d\xi_1.
\end{align*} 
By (\ref{eqn:polynomial times exponential decay}) and the fact that $j,k\geq 1$,
\[
e^{-\frac{c_0}4 \frac{|\xi_n|^2}{r_n}} |\xi_n|^{j+k-1} \leq 
\bigg(\frac{2(j+k-1)r_n}{c_0 e} \bigg)^{\frac{j+k-1}2} \leq
\bigg( \frac{2(j+k-1)s}{c_0 e} \bigg)^{\frac{j+k-1}2} \frac{r_n^{1/2}}{s^{1/2}}
\]
(where the last inequality uses $r_n \leq s$) and
\[
e^{-\frac{c_0}4 \frac{|\xi_\ell|^2}{r_\ell}} |\xi_\ell|^{(n-1)(j+k-1)} 
\leq \bigg( \frac{2(n-1)(j+k-1)s}{c_0 e} \bigg)^{(n-1)\frac{j+k-1}2}
\]
Consequently,
\begin{align*}
\int_{\C^n}&e^{-c_0 \frac{|z-\xi_1|^2}{s-r_1}}  
\Big[\prod_{m=1}^{n-1} e^{-c_0 \frac{|\xi_m-\xi_{m+1}|^2}{r_m-r_{m+1}}}\Big]e^{-c_0\frac{|\xi_n|^2}{r_n}}
|e(0,\xi_n)| |e(0,\xi_\ell)|^{n-1} \, d\xi_n\cdots d\xi_1 \\
&\leq  A^n \frac{e^{-\frac{c_0}4 \frac{|z|^2}{s}}}{s^{1/2}} \sjk \big( |\ajko|^n s^{n\frac{j+k-1}2} n^{n\frac{j+k-1}2} \big)
\frac{(s-r_1)r_1}s \Big(\prod_{m=1}^{n-1} \frac{ (r_m-r_{m+1})r_{m+1}}{r_m}\Big)r_n^{1/2}
\end{align*}
Proceeding as before and integrating the time derivatives using (\ref{eqn:convolution of s and (r-s)}) yields the estimate (\emph{iii}) for 
$\Wbstpw\Zstpz (\Mtpzw)^n \Htp(s,z,w)$ with no decay in $\tau$. 

%subsection: tau decay for two derivatives
\subsection{Proof of Theorem \ref{thm:Htp satisfies QSE}, \emph{(iii)} with decay in $\tau$.}
\label{subsec:decay in tau, 2 derivatives}
To prove the estimates with decay in $\tau$, we estimate  the space integral first.
We use Lemma \ref{lem:sum to a power} on $\prod_{j=1}^{n-3} |\xi(0, \xi_j)|$ and thus we must estimate 
the following term.
\begin{align*}
IV&:=\int_{\C^n}e^{-c_0 \frac{|z-\xi_1|^2}{s-r_1}}  
\Big[\prod_{m=1}^{n-1} e^{-c_0 \frac{|\xi_m-\xi_{m+1}|^2}{r_m-r_{m+1}}}\Big]e^{-c_0\frac{|\xi_n|^2}{r_n}}
|e(0,\xi_\ell)|^{n-3} |e(0,\xi_{n-2}) e(0,\xi_{n-1}) e(0,\xi_n)| \, d\xi_n\cdots d\xi_1 \\
&\leq \int_{\C^{n-2}}e^{-\frac{c_0}2 \frac{|z-\xi_1|^2}{s-r_1}}  
\Big[\prod_{m=1}^{n-3} e^{-\frac{c_0}2 \frac{|\xi_m-\xi_{m+1}|^2}{r_m-r_{m+1}}}\Big]e^{-\frac{c_0}2\frac{|\xi_{n-2}|^2}{r_{n-2}}}
|e(0,\xi_\ell)|^{n-3} |e(0,\xi_{n-2})| \\
&\times  \int_{\C^2} \Big[\prod_{\alpha=n-1}^{n} e^{-\frac{c_0}2 \frac{r_{\alpha-1}}{(r_{\alpha-1}-r_\alpha)r_\alpha}
 |\xi_\alpha - \frac{r_\alpha}{r_{\alpha-1}}\xi_{\alpha-1}|^2} \Big]
|e(0,\xi_{n-1})| e^{-\frac{c_0}4 \frac{|\xi_{n-1}|^2}{r_{n-1}}} e^{-c_0 \frac{r_n}{\mu(0,1/\tau)^2}} e^{-c_0 \frac{r_{n-2}-r_n} {\mu(\xi_{n-1},1/\tau)^2}}\\
&\times |e(0,\xi_n)| e^{-\frac{c_0}4 \frac{|\xi_n|^2}{r_n}}e^{-c_0 \frac{r_{n-1}}{\mu(\xi_n,1/\tau)^2}} \, d\xi_n\cdots d\xi_1
\end{align*}
Using arguments similar to the ones used in Section \ref{subsec:al4} we can estimate
\[
|e(0, \xi_\ell)|^{n-3} e^{-c_0 \frac{|\xi_\ell|^2}{4 r_\ell}} \leq 
\sjk |\ajko|^{n-3} s^{(n-3)\frac{j+k-1}2} (n-3)^{(n-3)\frac{j+k-1}2},
\]
\[
|e(0, \xi_{n-2})| e^{-c_0 \frac{|\xi_{n-2}|^2}{4 r_{n-2}}} \leq 
\sjk |\ajko| r_{n-2}^{(\frac{j+k-1}2)} \leq \sjk |\ajko| s^{(\frac{j+k-1}2)} \frac{r_{n-2}^{1/2}}{s^{1/2}}
\]
and
\begin{align*}
|e(0,\xi_n)|& e^{-\frac{c_0}4 \frac{|\xi_n|^2}{r_n}}e^{-c_0 \frac{r_{n-1}}{\mu(\xi_n,1/\tau)^2}} 
\leq C \sum_{j\geq 1} |\A{j1}{\xi_n}| |\xi_n|^j \frac{r_n^{\frac j2} r_{n-1}^{\frac 12} }{|\xi_n|^j r_{n-1}^{\frac 12}}
\frac{\mu(\xi_n,1/\tau)^{j+1}}{r_{n-1}^{\frac{j+1}2}} \leq \frac C\tau \frac{r_n^{1/4}}{r_{n-1}^{3/4}}.
\end{align*}
Since $\max\{r_n, r_{n-2}-r_n\}\geq \frac 12 r_{n-2}$, a similar argument shows that
\[
|e(0,\xi_{n-1})| e^{-\frac{c_0}4 \frac{|\xi_{n-1}|^2}{r_{n-1}}} e^{-c_0 \frac{r_n}{\mu(0,1/\tau)^2}} e^{-c_0 \frac{r_{n-2}-r_n} {\mu(\xi_{n-1},1/\tau)^2}}
\leq \frac{C}{\tau} \frac{r_{n-1}^{1/4}}{r_{n-2}^{3/4}}.
\]
Thus, the space integral is estimated as follows (note that we are using the integral estimates from the earlier case with $n-2$ replacing
$n$):
\begin{align*}
&\int_{\C^n}e^{-c_0 \frac{|z-\xi_1|^2}{s-r_1}}  
\Big[\prod_{m=1}^{n-1} e^{-c_0 \frac{|\xi_m-\xi_{m+1}|^2}{r_m-r_{m+1}}}\Big]e^{-c_0\frac{|\xi_n|^2}{r_n}}
|e(0,\xi_\ell)|^{n-3} |e(0,\xi_{n-2}) e(0,\xi_{n-1}) e(0,\xi_n)| \, d\xi_n\cdots d\xi_1 \\
&\leq  \bigg( \prod_{m=n-1}^n \frac{(r_{m-1}-r_m)r_m}{r_{m-1}}\bigg) r_n^{1/4} r_{n-1}^{-1/2} r_{n-2}^{-3/4} \\
&\times \int_{\C^{n-2}}e^{-\frac{c_0}2 \frac{|z-\xi_1|^2}{s-r_1}}  
\Big[\prod_{m=1}^{n-3} e^{-\frac{c_0}2 \frac{|\xi_m-\xi_{m+1}|^2}{r_m-r_{m+1}}}\Big]e^{-\frac{c_0}2\frac{|\xi_{n-2}|^2}{r_{n-2}}}
|e(0,\xi_\ell)|^{n-3} |e(0,\xi_{n-2})| \, d\xi_{n-2} \cdots d \xi_1 \\
&\leq \frac{C^n}{\tau^2s^{1/2}} e^{-c_0 \frac{|z|^2}{8 s}} \sjk |\ajko|^{n-2} s^{(n-2)\frac{j+k-1}2} (n-2)^{(n-2)\frac{j+k-1}2}
\frac{(s-r_1)r_1}s \bigg(\prod_{m=1}^{n-1} \frac{(r_m-r_{m+1})r_{m+1}}{r_m} \bigg) \frac{  r_n^{1/4} }{r_{n-2}^{1/4} r_{n-1}^{1/2}}.
\end{align*}
We can handle the time ($r$)-integrals using (\ref{eqn:convolution of s and (r-s)}) and compute
\begin{align*}
\int_0^s & \cdots \int_0^{r_{n-1}} (s-r_1)^{-1/2}(r_1-r_2)^{-1/2}\cdots (r_{n-1}-r_n)^{-1/2} r_{n-2}^{-1/4} r_{n-1}^{-1/2} r_n^{-3/4}\, dr_n\cdots dr_1\\
&= \frac{\sqrt\pi \Gamma(\frac 14)}{\Gamma(\frac 34)} 
\int_0^s  \cdots \int_0^{r_{n-2}} (s-r_1)^{-1/2}(r_1-r_2)^{-1/2}\cdots (r_{n-2}-r_{n-1})^{-1/2} r_{n-2}^{-1/4} r_{n-1}^{-3/4} \, dr_{n-1}\cdots dr_1\\
&= \frac{\pi \Gamma(\frac 14)^2}{\Gamma(\frac 34)^2} 
\int_0^s  \cdots \int_0^{r_{n-3}} (s-r_1)^{-1/2}(r_1-r_2)^{-1/2}\cdots (r_{n-3}-r_{n-2})^{-1/2} r_{n-2}^{-1/2} \, dr_{n-2}\cdots dr_1\\
&= \frac{\pi^{\frac{n-2}2}}{\Gamma(\frac{n-1}2)} \frac{\pi \Gamma(\frac 14)^2}{\Gamma(\frac 34)^2}  s^{\frac{n-1}2-1}.
\end{align*}
Plugging in the space and time estimates into $III$ finishes the $\tau$-decay argument.

%subsection: the rest of the cases
\subsection{End of the proof of Theorem \ref{thm:Htp satisfies QSE}}
The argument for \emph{(ii)} follows from the arguments that we have already done. For example, if $X = \Zbstpz$ or $\Zstpz$, then
the argument for \emph{(i)} can be followed line by line. If $X = \Wbstpw$ or $\Wstpw$, then the argument to prove
\emph{(iii)} can be imitated line by line.. Thus, all that remains is to prove the estimate for
$(\Mtpzw)^n\frac{\p\Htp}{\p s}(s,z,w) = -\Wstpw\Wbstpw (\Mtpzw)^n \Htp(s,z,w)$. The issue is that none of tricks that we used earlier will work because
the integral in $r_n$ will not converge. Instead, we want to integrate by parts on the $\Zstpxi$ terms. The clean way to do this is to use the 
first line in (\ref{eqn:Box M commutator}) and integrate by parts. In the term that we have been using as our demonstration estimation, the integral
(analogous to (\ref{eqn:main term in WZMH}) above) becomes
\begin{align}
&\int_0^s \int_0^{r_1} \cdots \int_0^{r_{n-1}} \int_{\C^n} \Htp(s-r_1,z,\xi_1)
\Big( \prod_{j=1}^{n-1} Z_{\tau p,\xi_j}\big[ e(0,\xi_j)\Htp(r_j-r_{j+1},\xi_j,\xi_{j+1})\big]\Big)\nn \\
&\hspace{2in}\times Z_{\tau p,\xi_n} \big[e(0,\xi_n)\Wstpw\Wbstpw \Htp(r_n,\xi_n,0)\big]\, d\xi_n\cdots d\xi_1\, dr_n\cdots  dr_1 \nn\\
&= \label{eqn: integration by parts for 2 derivs} (-1)^n\int_0^s \int_0^{r_1} \cdots \int_0^{r_{n-1}} \int_{\C^n} W_{\tau p, \xi_1} \Htp(s-r_1,z,\xi_1)
\Big( \prod_{j=1}^{n-1}  e(0,\xi_j)W_{\tau p,\xi_{j+1}}\Htp(r_j-r_{j+1},\xi_j,\xi_{j+1})\Big) \\
&\hspace{2in}\times e(0,\xi_n) \Wstpw\Wbstpw\Htp(r_n,\xi_n,0)\, d\xi_n\cdots d\xi_1\, dr_n\cdots  dr_1. \nn
\end{align}

After this integration by parts, we can proceed as with \emph{(iii)} above. To handle the terms that arise when the $w$-derivative does not
get applied to $\Htp(r_n,\xi_n,0)$, we can use a combination of integration by parts as in \eqref{eqn: integration by parts for 2 derivs} (this will be only be needed
if $X^2 = \Wstpw\Wbstpw$) and isolating the $\p N/\p w$ term similarly to how we handled $|e(0,\xi_{n-2})|$ in \S\ref{subsec:decay in tau, 2 derivatives}.
This concludes the proof of Theorem \ref{thm:Htp satisfies QSE}.

\bibliographystyle{alpha}

\end{document}